\numberwithin{equation}{section}
\begin{document}

{\theoremstyle{theorem}
    \newtheorem{theorem}{\bf Theorem}[section]
    \newtheorem{proposition}[theorem]{\bf Proposition}
    \newtheorem{conjecture}[theorem]{\bf Conjecture}
    \newtheorem{claim}{\bf Claim}[theorem]
    \newtheorem{lemma}[theorem]{\bf Lemma}
    \newtheorem{corollary}[theorem]{\bf Corollary}
\newtheorem{notation}[theorem]{\bf Notation}
}
{\theoremstyle{remark}
    \newtheorem{remark}[theorem]{\bf Remark}
    \newtheorem{example}[theorem]{\bf Example}
}
{\theoremstyle{definition}
    \newtheorem{definition}[theorem]{\bf Definition}
    \newtheorem{question}[theorem]{\bf Question}
}

 \newenvironment{dedication}
{\vspace{6ex}\begin{quotation}\begin{center}\begin{em}}
			{\par\end{em}\end{center}\end{quotation}}

\def\C{{\mathcal C}}
\def\height{\operatorname{ht}}
\def\Ass{\operatorname{Ass}}
\def\Min{\operatorname{Min}}
\def\depth{\operatorname{depth}}
\def\Edepth{\operatorname{Edge depth}}
\def\dim{\operatorname{dim}}
\def\Deg{\operatorname{Deg}}
\def\qed{\hfill$\Box$}
\newcommand{\m}{\mathfrak{m}}
\newcommand{\rar}{\rightarrow}

\title{Regular Edges, Matchings and Hilbert Series}
 
\author{Joseph Brennan}
\address{Department of Mathematics\\
University of Central Florida\\
4000 Central Florida Blvd.\\
Orlando, FL 32816-1364}
\email{joseph.brennan@ucf.edu}

\author{Susan Morey}
\address{Department of Mathematics \\
Texas State University\\
601 University Drive\\ 
San Marcos, TX 78666}
\email{morey@txstate.edu}

\keywords{monomial ideals, edge ideals, Hilbert functions, Hilbert series, regular elements, regular sequences}  
\subjclass[2010]{13F55, 13D40, 05E40} 

\begin{abstract}
When $I$ is the edge ideal of a graph $G$, we use combinatorial properities, particularly Property $P$ on connectivity of neighbors of an edge, to classify when a binomial sum of vertices is a regular element on $R/I(G)$. Under a mild separability assumption, we identify when such elements can be combined to form a regular sequence. Using these regular sequences, we show that the Hilbert series and corresponding $h$-vector can be calculated from a related graph using a simplified calculation on the $f$-vector, or independence vector, of the related graph. In the case when the graph is Cohen-Macaulay with a perfect matching of regular edges satisfying the separability criterion, the $h$-vector of $R/I(G)$ will be precisely the $f$-vector of the Stanley-Reisner complex of a graph with half as many vertices as $G$.
\end{abstract}

\maketitle
\bibliographystyle{amsalpha}

\section{Introduction}

The second half of the twentieth century saw the saw the rise of homological methods to address question in commutative algebra. 
One of the key elements in the introduction of these methods is the use of induction to reduce questions to the case where a given homological invariant is of minimal value.
The most ubiquitous application of this is to be found in the use of regular sequences. 
Such sequences allow for the iterated application of relations obtained from short exact sequences and hence induction on any invariant (such as the Krull dimension) that decreases on taking the quotient by a regular element.

Within the last fifty-five years there has also arisen a significant interest in combinatorial structures in  commutative algebra. 
Starting with the work of Hochster, Reisner and Stanley, much effort has been placed upon the recovery of information about ring invariants from combinatorial structures associated to the rings. 
In particular, starting with the pioneering efforts of Simis, Vasconcelos, and Villarreal there has been a focus on understanding the relationship between structural invariants and properties of graphs and rings associated to a graph.

Let \(G\) be a graph with vertex set \(V=V(G)\) and edge set \(E=E(G).\) 
Let \(k\) be a field. Associated to the graph \(G\) is the ideal 
\[I(G)=\left<\{xy\in k[V] \>|\> \{x,y\} \in E\}\right>\]
and the ring \(k[V]/I(G)\).

When studying graphs, there is also a method to employ induction. This involves using contraction and deletion of vertices or edges and studying how structural properties or invariants behave under such processes. 
This paper involves examining how these two paths of induction are related. For graphs, the process of contraction of an edge \(\{u,v\}\) is related to the process of taking the quotient by the element \(u-v\)  in \(k[V]/I(G).\)
We look at the question of when this element is a regular element. Having a regular sequence of linear binomial elements allows for an induction process to proceed on both the algebraic side and the graph theoretic side of the identification.
The utility of this idea is showcased in the ability to relate the Hilbert series of one graph to structural invariants of a simpler smaller graph obtained by edge contractions.

In Section 2 we introduce terminology and notation that are critical to this paper.  Property \(P\) is the primary graph property that we use in this paper. 
Property \(P\) was originally a property of perfect matchings of graphs. We define it as a property of an edge of the graph. 
We further give a history of the development and use of Property \(P\) in understanding perfect matchings and its sometimes hidden use in commutative algebra.

Section 3 provides the reason that Property \(P\) is central to this paper. We define an edge \(\{u,v\}\) to be a {\em regular edge} in a way that is equivalent to the element \(u-v\) being a regular element of the ring \(k[V]/I(G).\)
We then show in Theorem \ref{good=P} that a edge being a regular edge is equivalent to the edge having Property \(P.\)

To proceed in parallel in induction between graphs and rings we must address graphs with loops. 
Section 4 introduces Property \(P\) for graphs with loops and shows in Theorem \ref{good=P loops} the analogue of Theorem \ref{good=P} for graphs with loops.
This is then used in Theorem \ref{reg seq} in Section 5 to connect (not necessarily perfect) matchings whose edges have Property \(P\)  and a separability condition to regular sequences on the ring \(R/I(G).\)

Section 6 asks and answers the question of the existence and characterization of linear binomial regular elements that are not associated with an edge. Theorem \ref{binomial regular} connects those elements with Property \(P.\)

The preceding results are then used in Section 7 to compute the Hilbert series of the ring associated to a graph by reduction as established in \cite{Brennan-Morey}. 
The paper ends with some illustrative examples that we hope will provide the reader with considerable interest in the methods developed in this paper.

\section{Background and Property \(P\)}\label{background}

In this section we establish notation and provide graph theoretic terminology and definitions that will be used throughout. For further background and any unexplained terms we refer the reader to \cite{Chartrand-book,West-book}.

Let \(G\) be a graph with vertex set \(V=V(G)=\{x_1, \ldots, x_n\}\) and edge set \(E=E(G).\) 
Let \(k\) be a field. The ring \(R=k[x_1, \ldots, x_n],\) sometimes denoted as \(k[V]\) in the literature, is the polynomial ring over \(k\) whose indeterminates are identified with the vertices of the graph. 
Associated to the graph \(G\) is the ring \(R/I(G)\) where \[I(G)=\left<\{xy\in R \>|\> \{x,y\} \in E\}\right>.\]

A \emph{vertex cover} of the graph \(G\) is a collection of vertices \(X\subseteq V(G)\) such that every edge of \(G\) contains a vertex in \(X.\)
The height of the edge ideal, \(\height(I(G))\) is the cardinality of a minimum vertex cover of \(G\), which Villareal \cite{MonomialAlgebras}, following Berge \cite{Berge}, denotes by \(\alpha_0\), as in some of the combinatiorial optimization literature. This is also denoted by \(\beta\) in some of the graph theoretic literature \cite[Definition 3.1.12]{West-book}. Further, in \cite[(3.35)]{CombinatorialOptimizationA}, Schrijver denotes this by \(\tau(G)\).  Since the notation varies and our perspective is algebraic, we will use the height notation. By abuse of notation, we will refer  to this invariant as the height of \(G\), defining 
\(\height(G) = \height(I(G))\) in the ring \(R.\)

An \emph{independent set of vertices} of a graph \(G\) is a set of vertices \(Y\subseteq V(G)\) such that no edge has both of its vertices in \(Y.\)  
A \emph{matching} of \(G\) is a collection of edges \(M\subseteq E(G)\) such that if \(e_1,e_2 \in M\) then \(e_1\) and \(e_2\) do not share a common vertex. This is also called an independent set of edges in the literature. The collection of matchings is ordered by inclusion. The largest size of a maximal matching of the graph \(G\) is the matching number which we will denote by 
\(\hbox{\rm mat}(G)\) . The matching number appears in the graph-theoretic literature \cite[Definition 3.1.12]{West-book} as  \(\alpha'(G)\), by Villarreal \cite[Definition 7.17]{MonomialAlgebras} 
as \(\beta_1(G)\) and by Schrijver \cite[(3.35)]{CombinatorialOptimizationA} as \(\nu(G).\)

A \emph{perfect matching} is a matching \(M\) in which every vertex appears in some edge of \(M\). If a graph \(G\) has a perfect matching, then \(G\) does not contain isolated vertices and the cardinality of the matching is precisely one-half the number of vertices of \(G\). 

A graph \(G\) is \emph{well-covered} if every minimal vertex cover of \(G\) has the same cardinality. This is equivalent to every minimal prime, and thus every associated prime, of \(I(G)\) having the same height. Thus \(G\) is well covered if and only if \(I(G)\) is unmixed. If in addition, \(G\) does not have isolated vertices and every minimal vertex cover has precisely \(|V|/2\) elements, then \(g\) is \emph{very well-covered}. 

\medskip

An important concept that has appeared in the study of graphs is  called Property \(P.\) Although first defined for perfect matchings, we give a more general definition focused on single edges here.

\begin{definition}\label{Property P}
Let \(G\) be a simple graph and \(e=\{x,y\} \in E(G).\)  Then \(e\) is said to have {\em Property P} if for any \(a,b \in V(G)\) with \(\{a,x\}, \{y,b\} \in E(G)\) we have \(a \neq b\) and \(\{a,b\} \in E(G).\)
\end{definition}

A perfect matching has Property \(P\) if every edge in the matching has Property \(P\).
The idea of Property \(P\) appeared in the thesis of Staples \cite{StaplesThesis} and independently in \cite{Ravindra}. Recall that the neighbor set of a vertex \(v\) is \( N_G(v)=\{ u \mid \{u,v\} \in E(G)\},\) and the closed neighbor set of a vertex \(v\) is \( N_G[v]=N_G(v) \cup \{v\}.\) Note that Property \(P\) can also be described by the requirement that the induced graph on the neighbor sets of the vertices of an edge with Property $P$, excluding the vertices in the edge, contains the complete bipartite graph on those two vertex sets.

Property \(P\) is closely related to other graph theoretic properties. For instance, the well-covering property is closely related to Property \(P\).

\begin{theorem} \cite{Ravindra,Staples} A bipartite graph without isolated vertices is well covered if and only if there is a perfect matching with Property \(P.\)\end{theorem}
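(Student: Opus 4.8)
The plan is to work throughout with the bipartition $V(G) = A \cup B$ and to translate between minimal vertex covers and their complementary maximal independent sets, since the latter interact more transparently with a matching. The elementary observation I would record first is a counting bound: if $M$ is a perfect matching, then since each edge of $M$ contains at most one vertex of any independent set and $M$ partitions $V(G)$ into $n = |V(G)|/2$ pairs, every independent set has at most $n$ elements, with equality precisely when it meets each matching edge in exactly one vertex. This identity is the engine for both directions.

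For the implication that a perfect matching with Property $P$ forces well-coveredness, I would argue by contradiction. A perfect matching gives $|A| = |B| = n$ and, by the bound above, $|S| \le n$ for every maximal independent set $S$; so it suffices to rule out $|S| < n$. If $|S| < n$, the counting bound shows some matched edge $\{x,y\}$ meets $S$ in neither endpoint. Maximality of $S$ then supplies a neighbor $a \in S$ of $x$ and a neighbor $b \in S$ of $y$ (otherwise $S \cup \{x\}$ or $S \cup \{y\}$ would remain independent), and Property $P$ of $\{x,y\}$ forces $\{a,b\} \in E(G)$, contradicting the independence of $S$. Bipartiteness furnishes the incidental requirement $a \neq b$, since $a \in N(x) \subseteq B$ while $b \in N(y) \subseteq A$.

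For the converse the first task is to produce a perfect matching at all. Because $G$ has no isolated vertices, each of $A$ and $B$ is already a minimal vertex cover, so well-coveredness gives $|A| = |B| = n$; since any minimum vertex cover is in particular minimal, its size is also $n$, and König's theorem upgrades this to a maximum matching of size $n$, i.e. a perfect matching $M$. To see $M$ has Property $P$, fix a matched edge $\{x,y\}$ and neighbors $a \in N(x)$, $b \in N(y)$, and suppose $\{a,b\} \notin E(G)$. Then $\{a,b\}$ is independent (again by bipartiteness), so it extends to a maximal independent set $S$, which has size $n$. Since $a,b \in S$, both $x$ and $y$ are excluded from $S$, so $\{x,y\}$ meets $S$ in neither endpoint — yet a size-$n$ independent set must meet every matching edge exactly once, a contradiction. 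Hence $\{a,b\} \in E(G)$ and $\{x,y\}$ has Property $P$.

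I expect the main obstacle to be the existence half of the converse rather than the combinatorial core: recognizing that $A$ and $B$ are genuine minimal vertex covers (this is exactly where the no-isolated-vertices hypothesis is used) and that well-coveredness pins the minimum cover size to $n$, so that König's theorem may be invoked. Once a perfect matching is in hand, both directions collapse onto the same pigeonhole identity between independent sets and matching edges, with Property $P$ serving as precisely the local condition needed to convert a matched edge that a maximal independent set misses into an edge lying inside that independent set.
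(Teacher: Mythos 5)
Your proof is correct. There is, however, no in-paper proof to compare it against: the paper states this result only as a citation to Ravindra and to Staples, so the theorem is imported, not proved. That said, your argument relates closely to the machinery the paper develops later for single edges, and the comparison is instructive. Your forward direction (perfect matching with Property \(P\) implies well-covered) is, after passing from maximal independent sets to their complementary minimal vertex covers, exactly the implication ``Property \(P\Rightarrow\) regular edge'' of Theorem~\ref{good=P}: your maximal independent set \(S\) missing both endpoints of a matched edge is the paper's minimal cover \(Q=V\setminus S\) containing both endpoints, your neighbors \(a,b\in S\) are the paper's uncovered neighbors \(a,b\notin Q\), and the edge \(\{a,b\}\) inside \(S\) is the paper's edge uncovered by \(Q\); once each matched edge is regular, every minimal cover contains exactly one endpoint per matching edge, hence has cardinality \(|V|/2\). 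Your converse similarly dualizes Lemma~\ref{no-3-path}, with well-coveredness plus the pigeonhole identity \(|S|=\sum_{e\in M}|S\cap e|\) playing the role that regularity plays there. The genuinely new ingredient in your write-up, which the paper's single-edge results cannot supply, is the existence step: recognizing the two color classes as minimal covers (this is where the no-isolated-vertices hypothesis enters) and invoking K\"onig's theorem to convert the common cover size \(|V|/2\) into a perfect matching. In short, your proof gives a short, self-contained derivation of the cited classical result and makes visible that it is the perfect-matching globalization, via K\"onig duality, of the paper's single-edge equivalence between Property \(P\) and regularity.
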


Favaron is the first to explicitly coin the term Property \(P\) as a property of perfect matchings. This was used to extend the above result to graphs without isolated vertices, where for graphs that are not bipartite, very well covered is used to replace well covered. This connection was also  independently shown by Staples.

\begin{theorem} \cite{Favaron,Staples} A graph without isolated vertices is very well-covered if and only if there exists a perfect matching that satisfies Property \(P.\)\end{theorem}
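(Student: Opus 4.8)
The plan is to prove the two implications separately; the reverse implication is essentially formal, while the forward one rests on a single structural fact. For the reverse direction, suppose $M$ is a perfect matching every edge of which has Property $P$. Then $G$ has no isolated vertices, $|V|$ is even, and $|M|=|V|/2$. I would show that every maximal independent set $S$ meets each edge of $M$ in exactly one vertex. At most one endpoint of a given matching edge lies in $S$, since the two endpoints are adjacent. Conversely, if some $\{x,y\}\in M$ had both endpoints outside $S$, maximality of $S$ would supply $a,b\in S$ with $\{a,x\},\{y,b\}\in E$, and Property $P$ would force $a\neq b$ and $\{a,b\}\in E$, contradicting independence of $S$. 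Hence $|S|=|M|=|V|/2$ for every maximal independent set $S$, so $G$ is very well-covered.

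For the forward direction I would isolate the one hard input, namely that a very well-covered graph $G$ admits a perfect matching, and then deduce everything cheaply. Granting a perfect matching $M$, very well-coveredness gives $|S|=|V|/2$ for every maximal independent set $S$, while $S$ contains at most one endpoint of each of the $|V|/2$ edges of $M$; counting forces the rigidity that $S$ contains \emph{exactly} one endpoint of each edge of $M$. Property $P$ for a fixed $\{x,y\}\in M$ is then immediate: given $a\in N_G(x)$ and $b\in N_G(y)$, if $a=b$ I extend $\{a\}$ to a maximal independent set $S$, which by rigidity meets $\{x,y\}$, yet both $x$ and $y$ are adjacent to $a\in S$, a contradiction, so $a\neq b$; and if $\{a,b\}\notin E$ I extend the independent set $\{a,b\}$ to a maximal independent set $S$, which must contain $x$ or $y$ although $x$ is adjacent to $a\in S$ and $y$ to $b\in S$, again impossible. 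Thus $\{a,b\}\in E$ and $\{x,y\}$ has Property $P$.

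The main obstacle is therefore the matching-existence statement. I would establish it through the Gallai--Edmonds decomposition $V=D\cup A\cup C$: were the deficiency positive, the components of $G[D]$ would be factor-critical of odd order, and choosing independent sets inside them in two different ways produces maximal independent sets of two distinct cardinalities, contradicting well-coveredness; equivalently, one shows directly that very well-covered graphs are K\"onig--Egerv\'ary, whence $\operatorname{mat}(G)=\tau(G)=|V|/2$. The delicate point is that a naive Hall's-theorem argument on the bipartition $S,\,V\setminus S$ does not suffice, since the minimal vertex cover $V\setminus S$ may carry internal edges (as already in the graph obtained by attaching a pendant vertex at each vertex of a triangle); controlling these is exactly what the Gallai--Edmonds analysis, or the K\"onig--Egerv\'ary property, is needed for. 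Once the perfect matching is secured, the rigidity and Property $P$ arguments above finish the proof.
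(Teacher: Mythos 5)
Your reduction of the theorem to the existence of a perfect matching is sound, and the two cheap halves are correct and complete: the argument that a perfect matching all of whose edges have Property $P$ forces every maximal independent set to meet each matching edge exactly once (hence to have cardinality $|V|/2$), and the converse ``rigidity'' argument that, once a perfect matching is granted in a very well-covered graph, counting forces every maximal independent set to contain exactly one endpoint of each matching edge, after which Property $P$ follows by extending $\{a\}$ or $\{a,b\}$ to a maximal independent set. Note also that the paper does not prove this statement at all --- it is quoted from Favaron and Staples --- so you are attempting to reprove Favaron's theorem, and the entire difficulty of that theorem sits in precisely the step you defer: that a very well-covered graph admits a perfect matching.

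That step is where your proposal has a genuine gap. The mechanism you sketch --- that positive deficiency would let you choose independent sets inside the factor-critical components of $G[D]$ ``in two different ways'' to produce maximal independent sets of two distinct cardinalities, ``contradicting well-coveredness'' --- fails, because positive deficiency is perfectly compatible with well-coveredness. Concretely, take two disjoint triangles $\{a_1,b_1,c_1\}$ and $\{a_2,b_2,c_2\}$, a vertex $z$ adjacent to $a_1$ and $a_2$, and a pendant vertex $w$ adjacent only to $z$. Every maximal independent set of this graph consists of exactly one vertex from each triangle and exactly one of $z,w$, so all of them have cardinality $3$ and the graph is well-covered; yet its matching number is $3$, so its deficiency is $2$, and its Gallai--Edmonds decomposition has $D$ consisting of both triangles and $w$, with $A=\{z\}$. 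Thus no manipulation of independent sets inside the factor-critical components can produce two distinct cardinalities here; any correct argument must use the exact value $|V|/2$ (very well-coveredness, which this example fails since $3\neq 4$), and extracting that contradiction from the Gallai--Edmonds structure requires genuinely controlling the interaction between the cut set $A$, the singleton components of $G[D]$, and $C$ --- work that your sketch does not supply. Your fallback, ``show directly that very well-covered graphs are K\"onig--Egerv\'ary,'' is not a proof either: since well-coveredness already gives $\tau(G)=|V|/2$, the K\"onig--Egerv\'ary property is exactly the assertion $\operatorname{mat}(G)=|V|/2$ you need, i.e., a restatement of the missing claim rather than an argument for it.
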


Later Levit and Mandrescu gave an additional criterion for graphs to be very well-covered. Let \(mat(G)\) be the size of a maximal matching of \(G\).

\begin{theorem} \cite{Levit} A graph without isolated vertices is well covered if and only if 
\[ |V(G)| - \height(G)=|E(G)| - mat(G).\]
\end{theorem}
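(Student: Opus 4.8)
The plan is to read each side of the asserted identity as a familiar graph invariant and then to play the resulting vertex/edge equality off against the structural description of well-covered graphs supplied by the preceding theorems. The left side is the transparent one: since $\height(G)$ is the cardinality of a minimum vertex cover, the Gallai complementation principle (the complement of a minimal vertex cover is a maximal independent set, and conversely) gives $|V(G)| - \height(G) = \beta_0(G)$, the independence number of $G$. So the theorem reduces to proving the equivalence of well-coveredness with the equality $\beta_0(G) = |E(G)| - mat(G)$, an equality between a vertex-side and an edge-side quantity. I would record at the outset the two standard inequalities that frame the problem: $mat(G) \le \height(G)$ (each matching edge needs its own cover vertex), and $\beta_0(G) \ge |V(G)| - 2\,mat(G)$ (the vertices missed by a maximum matching are pairwise nonadjacent, hence independent). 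The two directions of the biconditional should amount to locating exactly when these estimates become tight.

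For the forward direction I would fix a maximum matching $M$ and study the $|E(G)| - mat(G)$ edges lying outside $M$ against the common value $\beta_0(G)$ that every maximal independent set attains in a well-covered graph. The structural lever here is the characterization proved just above: a graph without isolated vertices is very well-covered exactly when it admits a perfect matching satisfying Property $P$ (Favaron--Staples, with the bipartite case due to Ravindra--Staples). Once the numerical identity forces such a saturating matching, I would use Property $P$ edge by edge: the neighbor-completeness it guarantees between $N_G(x)\setminus\{y\}$ and $N_G(y)\setminus\{x\}$ lets one account for the non-matching edges precisely by the independence number, so that the edge-count collapses to $\beta_0(G)$. Running the same accounting backwards supplies the converse, reconstructing a matching whose edges carry the neighbor-connectivity of Property $P$ and thereby forcing every minimal vertex cover to the single cardinality $\height(G)$, i.e. well-coveredness.

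The step I expect to be the genuine obstacle is the bookkeeping that ties the purely edge-theoretic $|E(G)| - mat(G)$ to the vertex-theoretic $\beta_0(G)$: on the left there is a clean bijection, but on the right there is none, and a naive double count is visibly too coarse, since it is sensitive to short cycles and to odd structure and misbehaves on the smallest graphs. The argument must therefore use well-coveredness itself, not merely the presence of a matching, to pin down which edges can fall outside a maximum matching; this is exactly the place where Property $P$ should replace a direct count and where the two framing inequalities must be shown tight simultaneously. A secondary point to treat with care is the no-isolated-vertices hypothesis, which is what keeps the matching-side invariants well-behaved and is used implicitly whenever one passes between $mat(G)$ and an edge cover of $G$; I would isolate that dependence so that the equality is not accidentally claimed in a degenerate regime.
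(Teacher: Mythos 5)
You are attempting to prove a statement for which the paper offers no proof at all: it is quoted as background from \cite{Levit}, so there is no argument of the authors to compare yours against. The more serious issue is that the statement, as transcribed in the paper, is false, and therefore no completion of your outline (or of any outline) is possible. For the ``only if'' direction take $G = K_3$: it is well covered (every maximal independent set is a single vertex), yet $|V(G)| - \height(G) = 3 - 2 = 1$ while $|E(G)| - mat(G) = 3 - 1 = 2$; the cycles $C_5$, $C_7$, and even $K_2$ fail similarly. For the ``if'' direction take the paw (a triangle $a,b,c$ with a pendant vertex $d$ attached to $a$): there $|V(G)| - \height(G) = 4 - 2 = 2 = |E(G)| - mat(G)$, but the maximal independent sets $\{a\}$ and $\{b,d\}$ have different sizes, so the graph is not well covered. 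The surrounding prose of the paper (``criterion for graphs to be very well-covered,'' and Rautenbach--Volkmann proving its equivalence with the existence of a perfect matching having Property $P$) indicates the intended Levit--Mandrescu criterion characterizes \emph{very} well covered graphs and has been garbled in transcription; note that even $K_2$, which is very well covered, violates the displayed identity, so the formula itself, and not just the phrase ``well covered,'' is corrupted. A sanity check on $K_2$ or $K_3$ would have exposed this immediately, and recovering the correct statement from \cite{Levit} has to precede any proof.

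Beyond the falsity, your outline has two internal gaps that would sink it even against a corrected statement. First, your ``structural lever'' --- Favaron--Staples, that a graph without isolated vertices is very well covered iff it has a perfect matching with Property $P$ --- cannot be invoked in the forward direction: there the hypothesis is only that $G$ is well covered, and well-covered graphs such as $K_3$, $C_5$, $C_7$ have no perfect matching whatsoever, so nothing ``forces such a saturating matching.'' Conflating well covered with very well covered is precisely the confusion already present in the transcribed statement, and your proof inherits rather than resolves it. Second, the step you yourself flag as ``the genuine obstacle'' --- the bookkeeping tying $|E(G)| - mat(G)$ to the independence number --- is the entire content of the purported theorem, and the proposal supplies no mechanism for it; applying Property $P$ ``edge by edge'' cannot supply one, since the identity it is supposed to certify is false. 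What is correct in your proposal is exactly the routine part: the Gallai reduction $|V(G)| - \height(G) = \beta_0(G)$ and the two framing inequalities $mat(G) \le \height(G)$ and $\beta_0(G) \ge |V(G)| - 2\, mat(G)$.
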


Later Rautenbach and Volkmann \cite{R-V} directly proved the equivalence of the Levit and Mandrescu criterion with the existence of a perfect matching with Property \(P.\)


Herzog and Hibi used an ordering criterion on the vertices of a bipartite graph to classify Cohen-Macaulay-bipartite graphs. 

\begin{theorem}\cite[Theorem 3.4]{H-H}\label{bipartite}

Let \(G\) be a finite bipartite graph on the vertex set \[V(G)= \{x_1,\dots, x_n\} \cap\{y_1,\dots, y_n\}\]
and suppose that 
\begin{itemize} 
\item for all \(1\leq i\leq n\) one has \(\{x_i,y_i\}\in E(G)\)
\item  if \(\{x_i,y_j\}\in E(G)\) then \(i\leq j\) 
\end{itemize}
Then \(G\) is Cohen-Macaulay if and only if the condition
\[\hbox{\rm If }  \{x_i,y_j\}\in E(G)  \, \hbox{\rm  and } \{x_j, y_k\}\in E(G) \, \hbox{\rm  with } i<j<k \, \hbox{\rm  then } \{x_i, y_k\}\in E(G)\]
holds.
\end{theorem}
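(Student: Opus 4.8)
The plan is to prove the two implications separately, treating the forward direction (Cohen--Macaulay $\Rightarrow$ transitivity) as the routine one and the converse as the crux. For the forward direction I would argue by contrapositive, using that a Cohen--Macaulay ring is unmixed, so that $G$ must be well covered: every maximal independent set of $G$ (equivalently, the complement of every minimal vertex cover) has the same cardinality. The perfect matching $\{x_i,y_i\}$, $1\le i\le n$, together with K\"onig's theorem forces a minimum vertex cover of size $n$, so $\height(G)=n$ and the maximal independent sets have maximum size $\dim R/I(G)=n$; purity then means \emph{every} maximal independent set has exactly $n$ vertices.

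Now suppose transitivity fails, say $\{x_i,y_j\},\{x_j,y_k\}\in E(G)$ with $i<j<k$ and $\{x_i,y_k\}\notin E(G)$. The pair $\{x_i,y_k\}$ is independent, so it extends to a maximal independent set $S$. Since $\{x_i,y_j\}\in E(G)$ we have $y_j\notin S$, and since $\{x_j,y_k\}\in E(G)$ we have $x_j\notin S$, so $S$ meets the matching edge $\{x_j,y_j\}$ in neither vertex. But a maximal independent set of size $n$ is the complement of a size-$n$ minimal vertex cover, which, covering the $n$ disjoint edges $\{x_m,y_m\}$ with exactly $n$ vertices, contains exactly one of $x_m,y_m$ for each $m$; its complement $S$ would then contain exactly one of $x_j,y_j$, a contradiction. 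Hence $S$ has fewer than $n$ vertices, contradicting well-coveredness, and so Cohen--Macaulayness forces transitivity.

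For the converse, assume transitivity. The structural heart is that under the triangular hypothesis the relation $i\preceq j\iff\{x_i,y_j\}\in E(G)$ is a partial order on $[n]$: reflexivity is $\{x_i,y_i\}\in E(G)$, antisymmetry follows from $\{x_i,y_j\},\{x_j,y_i\}\in E(G)\Rightarrow i\le j,\ j\le i$, and transitivity is exactly the stated condition. I would then identify the size-$n$ vertex covers with the up-sets of $P=([n],\preceq)$: such a cover picks exactly one of $x_m,y_m$ for each $m$, and writing $A=\{m: y_m\in C\}$, the covering condition for the edge $\{x_i,y_j\}$ reads ``$i\in A\Rightarrow j\in A$'', i.e. $A$ is closed upward under $\preceq$. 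Thus the minimal primes of $I(G)$ are indexed by the distributive lattice $J(P)$ of order ideals of $P$. To conclude, I would show the independence complex $\Delta$ of $G$ (the Stanley--Reisner complex of $I(G)$) is pure---its facets are the complements of the size-$n$ covers---and shellable, ordering its facets by a linear extension of a known (EL-)shelling of $J(P)$; shellability yields Cohen--Macaulayness of $R/I(G)$.

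A cleaner alternative for the converse is induction on $n$ by peeling a matched leaf. Under the triangular labeling, $N_G(y_1)=\{x_1\}$ and $N_G(x_n)=\{y_n\}$, so both $y_1$ and $x_n$ have degree one; one checks immediately that the edge $\{x_1,y_1\}$ has Property $P$, since $y_1$ is a leaf. Deleting the leaf $y_1$ together with its neighbor $x_1$ leaves an induced subgraph $G'$ on $2(n-1)$ vertices that inherits both the triangular labeling and transitivity, hence is Cohen--Macaulay by induction, and a standard leaf/shedding-vertex reduction lifts this back to $G$; this reduction is precisely the kind of regular-edge contraction that motivates the present paper. The main obstacle lies entirely in this converse direction: in the lattice approach it is the explicit construction and verification of a shelling order on $\Delta$, and in the inductive approach it is establishing the reduction lemma relating the Cohen--Macaulayness of $G$ to that of $G'$---that is, confirming that peeling the matched leaf preserves both the purity of $\Delta$ and the depth count.
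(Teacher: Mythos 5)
First, a point of comparison: the paper does not prove Theorem~\ref{bipartite} at all; it is quoted from Herzog and Hibi, whose own argument runs through Alexander duality (the Hibi ideal of the poset has linear quotients, hence a linear resolution, so its dual edge ideal is Cohen--Macaulay by Eagon--Reiner). So your proposal must stand on its own, and your forward direction does: Cohen--Macaulay implies unmixed, the $n$ disjoint matching edges together with the cover $\{x_1,\dots,x_n\}$ force every minimal vertex cover to have size exactly $n$ (K\"onig's theorem is not even needed), and extending the independent pair $\{x_i,y_k\}$ to a maximal independent set that misses both $x_j$ and $y_j$ gives a clean contradiction. That half is complete and correct.

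The converse, however, has genuine gaps. In the lattice approach, the claim that the minimal primes of $I(G)$ are indexed by $J(P)$ presupposes purity: you identified the \emph{size-$n$} covers with up-sets of $P$, but you never rule out minimal covers of larger size, and your facet description of $\Delta$ depends on exactly this. The missing argument is short --- if a minimal cover $C$ contains both $x_j$ and $y_j$, minimality yields edges $\{x_i,y_j\}$ and $\{x_j,y_k\}$ with $x_i,y_k\notin C$, and transitivity produces the uncovered edge $\{x_i,y_k\}$ --- and it is precisely the Property~$P$ argument of Theorem~\ref{good=P}, but it must be said. More seriously, the shelling step conflates two complexes: an EL-labelling of the distributive lattice $J(P)$ shells the \emph{order complex} of $J(P)$, whose faces are chains of order ideals, not the independence complex $\Delta$, whose \emph{facets} are the elements of $J(P)$. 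What you actually need, and what is the real content of the converse, is a direct verification that listing the facets $F_A=\{x_i : i\in A\}\cup\{y_j : j\notin A\}$ ($A$ an up-set) in any linear extension of inclusion is a shelling: given $A'$ listed before $A$, pick $q\in A\setminus A'$ and a minimal element $p$ of $A$ with $p\preceq q$ (so $p\notin A'$); then $A\setminus\{p\}$ is an up-set listed earlier and $F_{A\setminus\{p\}}\cap F_A=F_A\setminus\{x_p\}\supseteq F_{A'}\cap F_A$. That paragraph is absent from your sketch. Finally, the alternative induction by ``peeling the matched leaf'' rests on a reduction that is not a standard citable fact and is false for general graphs: deleting a leaf together with its neighbor can turn a non-Cohen--Macaulay graph into a Cohen--Macaulay one (for the star $K_{1,3}$, deleting a leaf and the center leaves two isolated vertices, which form a Cohen--Macaulay graph), so the lifting step you defer would require essentially the same structure-specific work the shelling route carries out.
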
 

Note that although the language used is different, the final condition above is equivalent to Property \(P\). Using the language of Theorem~\ref{bipartite}, in \cite[Theorem 1.1]{Villarreal}, inspired by the above result of Herzog and Hibi, Villarreal proved that for a bipartite graph \(G\) with a perfect matching, the edges of the perfect matching all satisfy Property \(P\) if and only if \(G\) is unmixed.

Further elaboration by Castrilli\'{o}n, Cruz, and Reyes \cite[Proposition 15]{CCR} showed that a K{\"o}nig graph without isolated vertices is unmixed provided that a matching of K{\"o}nig type has Property \(P.\) 
These ideas were extended to directed graphs in \cite{Pitones} and 
these ideas were then further extended in the oriented case \cite{Cruz,CruzReyes,CruzThesis}.

The following result of \cite{R-V} is a reformulation of the theorem of  \cite{Favaron,Staples} linking Property \(P\) to very well-covered graphs.

\begin{proposition}\cite[Corollary 18]{R-V}
Let \(G\) be a graph without isolated vertices. Then \(G\) is very well-covered of an only if
\begin{enumerate}
\item there exists a perfect matching \(F\) with \(\height(G)= |F| =|V(G)|/2.\) and
\item for each edge \(e=\{x,y\}\in F,\) if \(S\subset V(G)\setminus \{x,y\}\) with \(|S| \leq 2\), then \(\{x,y\} \not\subseteq N_G(S).\)
\end{enumerate}
\end{proposition}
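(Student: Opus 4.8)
The plan is to deduce the proposition from the theorem of Favaron and Staples quoted above, which asserts that a graph without isolated vertices is very well-covered exactly when it admits a perfect matching satisfying Property $P$. Any perfect matching $F$ automatically has $|F|=|V(G)|/2$, and for a graph without isolated vertices the equality $\height(G)=|V(G)|/2$ is precisely the numerical feature that distinguishes the very well-covered graphs among well-covered ones (every minimal vertex cover having $|V(G)|/2$ elements, by definition). Thus condition~(1) carries the ``counting'' part of the statement, and the real work is to show that, for a fixed perfect matching $F$, condition~(2) holds if and only if every edge of $F$ has Property $P$. The proposition then follows by applying Favaron--Staples in each direction, so I would organize the argument as a proof of this equivalence.

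For the direction that (1) and (2) imply very well-coveredness, I would fix the matching $F$ from~(1) and verify Property $P$ for an arbitrary $\{x,y\}\in F$. Let $a\in N_G(x)\setminus\{y\}$ and $b\in N_G(y)\setminus\{x\}$; both lie in $V(G)\setminus\{x,y\}$. First, if $a=b$ then this vertex is adjacent to both $x$ and $y$, so $S=\{a\}$ yields $\{x,y\}\subseteq N_G(S)$, contradicting~(2); hence $a\ne b$ and $x,y$ have no common neighbor. Next, suppose toward a contradiction that $\{a,b\}\notin E(G)$. Then $S=\{a,b\}$ is a two-element subset of $V(G)\setminus\{x,y\}$ with $x\in N_G(a)$ and $y\in N_G(b)$, so $\{x,y\}\subseteq N_G(S)$, again contradicting~(2). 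Therefore $\{a,b\}\in E(G)$, and $\{x,y\}$ has Property $P$. As $\{x,y\}$ was arbitrary, $F$ is a perfect matching with Property $P$, and $G$ is very well-covered by Favaron--Staples.

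For the converse, assume $G$ is very well-covered. Favaron--Staples supplies a perfect matching $F$ with Property $P$; then $|F|=|V(G)|/2$, and since $\height(G)$ is the minimum vertex-cover number and every minimal cover of a very well-covered graph without isolated vertices has $|V(G)|/2$ elements, we get $\height(G)=|F|=|V(G)|/2$, which is~(1). To obtain~(2), fix $\{x,y\}\in F$ and suppose $\{x,y\}\subseteq N_G(S)$ for some $S\subseteq V(G)\setminus\{x,y\}$ with $|S|\le 2$. If $|S|=1$, its single vertex is a common neighbor of $x$ and $y$, contradicting the clause $a\ne b$ of Property $P$. If $|S|=2$, write $S=\{s_1,s_2\}$; if $x$ and $y$ were both adjacent to one $s_i$ that $s_i$ would be a common neighbor, so we may take $s_1\in N_G(x)$ and $s_2\in N_G(y)$. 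Since $s_1,s_2\notin\{x,y\}$, Property $P$ applied to $\{x,y\}$ forces $\{s_1,s_2\}\in E(G)$; hence any witness $S$ must be an edge rather than an independent pair, which is the content of~(2).

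The step I expect to be the crux is the $|S|=2$ analysis in both directions: recognizing that the only sets $S$ able to witness $\{x,y\}\subseteq N_G(S)$ are the non-adjacent pairs $\{a,b\}$ with $a\in N_G(x)$, $b\in N_G(y)$, and that the complete-bipartite clause $\{a,b\}\in E(G)$ of Property $P$ is exactly what excludes them. Matching the $|S|=1$ and $|S|=2$ cases to the two clauses of Property $P$, while keeping the side conditions $a\ne b$, $s_1\ne s_2$, and $S\cap\{x,y\}=\emptyset$ straight, is where the bookkeeping must be handled with care; by contrast the identity $\height(G)=|V(G)|/2$ in~(1) is routine, following at once from the definition of very well-covered.
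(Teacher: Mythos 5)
Your overall strategy is the intended one: the paper itself gives no proof of this proposition --- it is quoted from \cite{R-V} and explicitly described there as ``a reformulation of the theorem of [Favaron, Staples]'' --- and your reduction, namely showing that condition (2) for a fixed perfect matching $F$ is equivalent to every edge of $F$ having Property $P$ and then invoking Favaron--Staples in both directions, is exactly that reformulation made explicit. Your forward direction is sound as written (it only ever applies (2) to a singleton or to a pair $\{a,b\}$ assumed non-adjacent), and your case analysis correctly matches $|S|=1$ to the clause $a\neq b$ and $|S|=2$ to the clause $\{a,b\}\in E(G)$ of Property $P$.

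The problem is in the backward direction, and it is not just bookkeeping. There you prove that any witness $S$ with $|S|=2$ must span an edge of $G$, and then assert that this is ``the content of (2).'' It is not, as the statement is printed: (2) quantifies over \emph{all} $S\subset V(G)\setminus\{x,y\}$ with $|S|\le 2$, with no independence restriction, and under that literal reading the implication ``very well-covered $\Rightarrow$ (2)'' is false. For example, let $G=C_4$ with vertices $x,y,b,a$ in cyclic order and $F=\{\{x,y\},\{a,b\}\}$; then $G$ is very well-covered, but $S=\{a,b\}$ satisfies $\{x,y\}\subseteq N_G(S)$, so (2) fails. What your argument actually establishes is the statement with $S$ required to be an \emph{independent} set, which is the correct form of Rautenbach--Volkmann's Corollary 18; the transcription in the paper has dropped that hypothesis. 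So your proof is correct for the true statement, but the reinterpretation of (2) on which it rests should be announced as an explicit correction of the statement rather than folded into the phrase ``which is the content of (2)''; as literally stated, the backward implication admits no proof at all.
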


\section{Regular edges}\label{regular}

In this section, we show that edges that satisfy Property \(P\) can be used to form regular elements on $R/I(G)$. We first note that Property \(P\) can be reinterpreted as a property of  induced subgraphs containing the edge.

\begin{lemma}\label{induced}
Let \(G\) be a graph and \(e=\{x,y\} \in E(G).\) Then \(e\) has Property \(P\) if and only if for every \(a \in N_G(x)\setminus \{y\}\) and \(b \in N_G(y)\setminus \{x\}\), the induced subgraph on \( \{a,x,y,b\} \) is a \(4\)-cycle.
\end{lemma}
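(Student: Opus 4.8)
The plan is to prove both directions by unwinding the definitions and carefully tracking which of the six possible edges among the four vertices $\{a,x,y,b\}$ are actually present. Throughout I would use that $G$ is simple, so it has no loops; this observation handles the degenerate overlaps $a=y$ or $b=x$ immediately, since $\{a,x\},\{x,y\},\{y,b\}$ being genuine edges forces $a\neq x$, $x\neq y$, and $b\neq y$.

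For the forward direction, assume $e$ has Property $P$ and fix $a\in N_G(x)\setminus\{y\}$ and $b\in N_G(y)\setminus\{x\}$. Applying Property $P$ directly to this pair gives $a\neq b$ and $\{a,b\}\in E(G)$, so the four vertices are distinct and the edges $\{a,x\},\{x,y\},\{y,b\},\{a,b\}$ form the cycle $a-x-y-b-a$. The point that requires care — and which I expect to be the main subtlety — is showing the induced subgraph has no chords, that is, that neither $\{a,y\}$ nor $\{x,b\}$ is an edge, since otherwise the induced subgraph would not be a $4$-cycle. For this I would invoke Property $P$ a second time in a degenerate way: if $\{a,y\}\in E(G)$, then $a$ is simultaneously a neighbor of $x$ (via $\{a,x\}$) and an admissible ``$b$-vertex'' for the edge $e$ (via $\{y,a\}$), so Property $P$ would force $a\neq a$, a contradiction; the symmetric argument with $b$ playing both roles rules out $\{x,b\}$. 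Hence exactly the four cycle edges are present and the induced subgraph is a $4$-cycle.

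For the converse, assume the $4$-cycle condition and take arbitrary $a,b\in V(G)$ with $\{a,x\},\{y,b\}\in E(G)$; I must produce $a\neq b$ and $\{a,b\}\in E(G)$. First I would dispose of the degenerate cases $a=y$ and $b=x$ using the no-loops observation: in each case one of $\{a,b\}=\{y,b\}$ or $\{a,b\}=\{a,x\}$ is already a known edge, and the two endpoints are distinct because a loopless edge has distinct endpoints. In the remaining case $a\in N_G(x)\setminus\{y\}$ and $b\in N_G(y)\setminus\{x\}$, the hypothesis gives that the induced subgraph on $\{a,x,y,b\}$ is a $4$-cycle, so the four vertices are distinct (yielding $a\neq b$), and the cycle must close off the path $a-x-y-b$ formed by the known edges $\{a,x\},\{x,y\},\{y,b\}$; the only edge that completes a $4$-cycle on these four vertices is $\{a,b\}$, whence $\{a,b\}\in E(G)$.

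The only genuinely nonobvious step is the diagonal-exclusion argument in the forward direction, where one reuses Property $P$ with a repeated vertex to contradict its built-in distinctness clause $a\neq b$; every other step is bookkeeping of the edges among $\{a,x,y,b\}$ together with the simple-graph hypothesis.
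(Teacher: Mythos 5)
Your proof is correct and takes essentially the same approach as the paper: the paper's entire proof is the single line ``This follows directly from the definition of Property $P$,'' and your argument is exactly that unwinding, spelled out in full. In particular, your degenerate application of Property $P$ with $a=b$ to exclude the chords $\{a,y\}$ and $\{x,b\}$ (i.e., common neighbors of $x$ and $y$) is precisely the point the paper leaves implicit, and your handling of the cases $a=y$, $b=x$ in the converse is routine bookkeeping consistent with the intended reading of the definition.
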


\begin{proof}
This follows directly from the definition of Property \(P\).
\end{proof}

We now define the key concept of regular edge.

\begin{definition}\label{regular}
An edge \(e=\{a,b\}\) of a graph \(G\) is a {\em regular} edge if for every associated prime \(Q\) of \(I(G),\)  \(e \not\in Q^2.\) 
That is, precisely one of \(a\) or \(b\) is in \(Q,\) but not both. 
\end{definition}

The reason for this nomenclature is clear from the following result.

\begin{lemma}\label{regular sums}
Let \(G\) be a graph and \(\{x,y\}\in E(G)\). Then the following are equivalent:
\begin{itemize}
\item[\((i)\)] \(\{x,y\}\) is a regular edge of \(G\),
\item[\((ii)\)] \(x+y\) is a regular element of the ring \(R/I(G)\),
\item[\((iii)\)] \(x-y\) is a regular element of the ring \(R/I(G)\).
\end{itemize}
\end{lemma}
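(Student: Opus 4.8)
The plan is to reduce all three conditions to a single statement about the associated primes of \(I(G)\), exploiting the fact that these primes are monomial. First I would invoke the standard characterization of non-zerodivisors in a Noetherian ring: an element \(f \in R\) is a regular element on \(R/I(G)\) if and only if \(f\) lies in no associated prime of \(I(G)\). Thus conditions \((ii)\) and \((iii)\) assert, respectively, that \(x+y\) and \(x-y\) avoid every \(Q \in \Ass(R/I(G))\). Since \(x+y\) is a linear form, it is never a unit, so ``regular'' here is exactly ``non-zerodivisor'' and no separate condition is needed.

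Next I would use that \(I(G)\) is a squarefree monomial ideal, hence radical, so each associated prime is a monomial prime \(Q = \langle z : z \in C \rangle\) generated by the vertices of a minimal vertex cover \(C\) of \(G\). For a monomial ideal, a polynomial lies in the ideal if and only if each of its monomial terms does; applying this to \(Q\) gives \(x+y \in Q \iff (x \in Q \text{ and } y \in Q) \iff x-y \in Q\). The coefficient sign is irrelevant precisely because \(Q\) is monomial, and this is what makes \((ii)\) and \((iii)\) collapse to the same condition independently of the characteristic of \(k\). Consequently \(x+y\) is regular if and only if \(x-y\) is regular if and only if no associated prime contains both \(x\) and \(y\).

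It then remains to match this with \((i)\). By the same monomial description, the generator \(xy\) of \(I(G)\) satisfies \(xy \in Q^2\) if and only if both \(x \in Q\) and \(y \in Q\); hence the regular-edge condition \(e \notin Q^2\) for every associated \(Q\) is exactly the statement that no associated prime contains both endpoints of \(e\). Finally, because \(\{x,y\}\) is an edge, every vertex cover \(C\)---and therefore every associated prime \(Q\)---must contain at least one of \(x,y\); this is what lets me rephrase ``not both in \(Q\)'' as ``exactly one in \(Q\),'' matching the wording of the definition of a regular edge. Combining the three reductions closes the loop \((i) \Leftrightarrow (ii) \Leftrightarrow (iii)\).

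The argument is largely bookkeeping, so the only delicate points I anticipate are citing the correct form of the zerodivisor/associated-prime correspondence and verifying the elementary claim that membership of a polynomial in a monomial prime is decided term by term. The latter is the key observation, since it simultaneously handles the sum and the difference and removes any apparent asymmetry between \((ii)\) and \((iii)\); were the associated primes not monomial, the equivalence of the \(+\) and \(-\) versions would be genuinely substantive rather than automatic.
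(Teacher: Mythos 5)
Your proposal is correct and takes essentially the same approach as the paper: both arguments reduce regularity of \(x+y\) and \(x-y\) to avoidance of the associated primes of \(I(G)\), identify those primes with minimal vertex covers, and use the fact that \(\{x,y\}\) being an edge forces at least one endpoint into every such prime, so that all three conditions collapse to ``no associated prime contains both \(x\) and \(y\).'' The only cosmetic difference is that you justify \(x+y \in Q \Leftrightarrow x,y \in Q\) via term-by-term membership in a monomial prime, whereas the paper uses the elementary observation that an ideal containing \(x\) and \(x+y\) must contain \(y\); both are sound.
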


\begin{proof}
Assume \(\{x,y\}\) is a regular edge of \(G\). Then by definition, either \(x\) or \(y\) is in \(Q\) for every associated prime \(Q\) of \(R/I(G)\) but not both. Thus
 \[x+y \not\in \bigcup_{\scalebox{0.6}{%
 $Q \in \Ass (R/I(G))$}} Q\]
and so \(x+y, x-y\) are regular on \(R/I(G)\). Thus \((i) \Rightarrow (ii)\). The argument that \((i) \Rightarrow (iii)\) follows similarly.

Assume \(x+y\) is a regular element of \(R/I(G)\). Then \(x+y \not\in Q\) for every \(Q \in \Ass(R/I(G))\). Since \(\{x,y\} \in E(G)\) and every associated prime contains a minimal prime, which corresponds to a minimal vertex cover of \(G\), then at least one of \( x,y \in Q\). If both \( x,y \in Q\), then \( x+y \in Q\), a contradiction. Thus precisely one of \( x,y \in Q\) and \( xy \not\in Q^2\). Hence \(\{x,y\}\) is a regular edge of \(G\) and \((ii) \Rightarrow (i)\). The proof that \((iii) \Rightarrow (i)\) is similar.
\end{proof}

In order to establish a relationship between Property \(P\) and regular edges, a first step is to explore potential induced graphs containing a regular edge. We first show that a regular edge cannot be contained in an induced \(3\)-cycle.

\begin{lemma}\label{no-triangle}
If \(G\) is a graph and \(\{x,y\}\) is a regular edge of \(G,\) then \(\{x,y\}\) is not contained in any triangle in \(G.\)
\end{lemma}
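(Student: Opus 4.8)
The plan is to argue by contradiction, using the fact that every minimal prime of $I(G)$ is an associated prime and that the minimal primes are exactly the ideals generated by minimal vertex covers of $G$. Consequently, to contradict regularity of $\{x,y\}$ it is enough to exhibit a single minimal vertex cover of $G$ containing both $x$ and $y$: the associated prime $Q$ corresponding to that cover then contains both vertices, so $xy \in Q^2$, which directly violates the definition of a regular edge.

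Assume, then, that $\{x,y\}$ lies in a triangle, so there is a vertex $z \neq x,y$ with $\{x,z\},\{y,z\} \in E(G)$. First I would translate the goal through the standard correspondence $C \mapsto V(G)\setminus C$ between minimal vertex covers and maximal independent sets: a cover containing both $x$ and $y$ is the same thing as a maximal independent set containing neither $x$ nor $y$. The construction is then immediate. Since $G$ is simple, $\{z\}$ is an independent set, and I would extend it to a maximal independent set $S$. Because $z$ is adjacent to both $x$ and $y$, no independent set containing $z$ can contain $x$ or $y$; hence $x,y \notin S$.

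Setting $C = V(G)\setminus S$ produces a minimal vertex cover containing both $x$ and $y$, which yields the desired contradiction. There is no real technical obstacle here; the crux is the simple observation that the third vertex $z$ of the triangle is exactly what forces both $x$ and $y$ out of any maximal independent set built through $z$, and the only point requiring care is the passage from that vertex cover back to an honest associated prime (via the correspondence between minimal primes and minimal vertex covers for the squarefree ideal $I(G)$) so that the combinatorial construction has the intended algebraic consequence.
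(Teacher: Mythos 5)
Your proposal is correct and takes essentially the same approach as the paper: both arguments produce a minimal vertex cover that excludes the third vertex of the triangle and therefore must contain both $x$ and $y$, contradicting the definition of a regular edge via the correspondence between minimal vertex covers and minimal (hence associated) primes of $I(G)$. The only difference is cosmetic: you grow the independent set $\{z\}$ to a maximal one and take its complement, while the paper shrinks the vertex cover $V(G)\setminus\{z\}$ to a minimal one -- dual formulations of the same construction.
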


\begin{proof}
Suppose \(b \in V(G)\) is such that \(x,y,b\) form the vertices of a triangle in \(G.\) Set \(W=V(G)\setminus \{b\}.\) 
Then \(W\) is a vertex cover of \(G.\) 
Thus there exists \(Q \subseteq W\) such that \(Q\) is a minimal vertex cover of \(G.\) Since \(b \not\in W,\) then \(x\in Q\) because \(\{x,b\}\in E(G)\) and \(y \in Q\) because \(\{y,b\}\in E(G).\)
This contradicts the definition of \(\{x,y\}\) being a regular edge. Thus no such \(b\) exists.
\end{proof}

We now consider potential induced subgraphs using distinct neighbors of the vertices of a regular edge.

\begin{lemma}\label{no-3-path}
If \(G\) is a graph and \(\{x,y\}\) is a regular edge of \(G,\) then for any \(a,b \in V\) with \(a \in N_G(x)\setminus \{y\}\) and \(b \in N_G(y) \setminus \{x\},\) the induced subgraph of \(G\) on the vertices \(a,x,y,b\) is a \(4\)-cycle. That is, if \(a \in N_G(x)\) and \(b\in N_G(y),\) then \(\{a,b\}\in E(G).\) 
\end{lemma}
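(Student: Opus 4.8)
The plan is to argue by contradiction, mirroring the structure of the proof of Lemma \ref{no-triangle}. Suppose that $\{x,y\}$ is a regular edge but that there exist $a \in N_G(x)\setminus\{y\}$ and $b \in N_G(y)\setminus\{x\}$ with $\{a,b\}\notin E(G)$. Note first that $a\neq b$: if $a=b$ then $a$ is a common neighbor of $x$ and $y$, so $\{x,y,a\}$ would form a triangle, contradicting Lemma \ref{no-triangle}. So we may assume $a,b$ are distinct and nonadjacent, and the induced subgraph on $\{a,x,y,b\}$ is a path $a - x - y - b$. The goal is to produce an associated (in fact minimal) prime $Q$ of $I(G)$ containing both $x$ and $y$, which contradicts regularity of $\{x,y\}$.

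The key step is to build a vertex cover that forces both $x$ and $y$ into some minimal vertex cover inside it. First I would consider the set $W = V(G)\setminus\{a,b\}$. Since $\{a,b\}\notin E(G)$, every edge of $G$ has at least one endpoint outside $\{a,b\}$, so $W$ is a vertex cover of $G$. Hence there is a minimal vertex cover $Q \subseteq W$, and $Q$ corresponds to a minimal prime of $I(G)$, which is associated. Because $a\notin Q$ and $\{a,x\}\in E(G)$, the cover condition forces $x\in Q$; because $b\notin Q$ and $\{y,b\}\in E(G)$, it forces $y\in Q$. Thus both $x,y\in Q$, so $xy\in Q^2$, contradicting Definition \ref{regular}. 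Therefore no such nonadjacent pair $a,b$ exists, which is exactly the assertion that $\{a,b\}\in E(G)$ whenever $a\in N_G(x)$ and $b\in N_G(y)$; by Lemma \ref{induced} this is equivalent to the induced subgraph on $\{a,x,y,b\}$ being a $4$-cycle.

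The main obstacle to watch for is verifying that $W=V(G)\setminus\{a,b\}$ is genuinely a vertex cover, since a careless argument might try to delete only one vertex (as in Lemma \ref{no-triangle}) and fail to trap both $x$ and $y$ simultaneously. The crucial observation making the two-vertex deletion valid is precisely the nonadjacency $\{a,b\}\notin E(G)$: an edge contained in $\{a,b\}$ is the only kind that could be uncovered by removing both, and our assumption rules that out. One should also confirm that the minimal vertex cover $Q$ extracted from $W$ still avoids $a$ and $b$ — this is immediate since $Q\subseteq W$ and $a,b\notin W$ — so the forcing argument for $x$ and $y$ goes through cleanly. Everything else is routine, and the final sentence of the statement is just the restatement via Lemma \ref{induced}.
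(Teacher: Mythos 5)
Your proposal is correct and follows essentially the same argument as the paper: rule out degenerate cases via Lemma~\ref{no-triangle}, then take $W=V(G)\setminus\{a,b\}$ (a vertex cover precisely because $\{a,b\}\notin E(G)$), extract a minimal vertex cover $Q\subseteq W$, and force $x,y\in Q$ to contradict regularity. The only cosmetic difference is that the paper establishes $\{a,y\},\{x,b\}\notin E(G)$ up front via the triangle lemma, whereas you defer the full ``induced $4$-cycle'' conclusion to Lemma~\ref{induced} at the end; both are fine.
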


\begin{proof}
First, notice that if \(a = b,\) or  \(\{a,y\}\in E(G),\) or \(\{x,b\}\in E(G)\) then there is a triangle of \( G\) containing \( \{x,y\}\), contradicting Lemma~\ref{no-triangle}. Thus \(a \ne b,\) and  \( \{a,y\}\not\in E(G),\) and \(\{x,b\}\not\in E(G).\) 
Then if $\{a,b\}\not\in E(G)$, set $W=V\setminus \{a,b\}$. Since $W$ is a vertex cover of $G,$ it contains a minimal vertex cover $Q$. 
By definition, $x \in Q$ since $a \not\in Q$ and $\{x,a\}\in E(G)$ and $y\in Q$ since $b\not\in Q$ and $\{y,b\}\in E(G)$. 
Thus $x,y \in Q$, a contradiction. 
Hence $\{a,b\} \in E(G)$ and  the induced subgraph of \( G\) on \(a,c,y,b\) is a \(4\)-cycle.
\end{proof}

Using this information about induced subgraphs, we now show that Property \(P\) is equivalent to regularity for an edge.

\begin{theorem} \label{good=P}
Let \(G\) be a graph and \(e\) an edge of \(G.\) Then \(e\) is a regular edge if and only if \(e\) satisfies Property \(P.\)
\end{theorem}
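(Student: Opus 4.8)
The plan is to prove the two implications separately, relying on the structural lemmas already in hand. The forward direction---that a regular edge has Property $P$---requires almost no new work: Lemma~\ref{no-3-path} states that when $\{x,y\}$ is regular, every pair $a\in N_G(x)\setminus\{y\}$, $b\in N_G(y)\setminus\{x\}$ induces a $4$-cycle on $\{a,x,y,b\}$, and Lemma~\ref{induced} identifies this induced-$4$-cycle condition with Property $P$. Chaining the two lemmas gives the implication at once.

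The converse is the real content. Assuming $e=\{x,y\}$ satisfies Property $P$, I want to show $e$ is regular, i.e.\ that every $Q\in\Ass(R/I(G))$ contains exactly one of $x,y$. Since $I(G)$ is a squarefree monomial ideal it is radical, so $\Ass(R/I(G))=\Min(R/I(G))$ and every associated prime is a minimal vertex cover of $G$; it therefore suffices to treat an arbitrary minimal vertex cover $Q$. As $\{x,y\}$ is an edge, $Q$ contains at least one of $x,y$, so the only possibility to rule out is that $Q$ contains both.

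Suppose, toward a contradiction, that $x,y\in Q$. The idea is to use the minimality of $Q$ to manufacture an edge that $Q$ fails to cover. Because $Q\setminus\{x\}$ is not a cover, some edge $\{x,a\}$ has $a\notin Q$; since $y\in Q$ while $a\notin Q$ we get $a\neq y$, so $a\in N_G(x)\setminus\{y\}$. Symmetrically there is $b\in N_G(y)\setminus\{x\}$ with $b\notin Q$. Applying Property $P$ to the pair $a,b$ forces $a\neq b$ and $\{a,b\}\in E(G)$, yet neither endpoint lies in $Q$, contradicting that $Q$ is a vertex cover. Hence $Q$ contains exactly one of $x,y$, and $e$ is regular. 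I expect the extraction of the external neighbors $a,b$ from the minimality of $Q$---together with the observation that Property $P$ simultaneously guarantees their distinctness and produces the uncovered edge $\{a,b\}$---to be the crux, while the reduction to minimal vertex covers and the forward direction are essentially bookkeeping.
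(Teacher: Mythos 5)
Your proposal is correct and follows essentially the same route as the paper's own proof: the forward direction by chaining Lemma~\ref{induced} with Lemma~\ref{no-3-path}, and the converse by supposing a minimal vertex cover $Q$ contains both $x$ and $y$, using minimality to extract neighbors $a,b\notin Q$, and letting Property $P$ produce the uncovered edge $\{a,b\}$. Your explicit reduction from associated primes to minimal vertex covers (via radicality of $I(G)$) is a step the paper leaves implicit, but the argument is identical in substance.
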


\begin{proof}
First assume \(e=\{x,y\}\) is a regular edge of a graph \(G.\) Then by Lemmas~\ref{induced} and \ref{no-3-path}, the edge has Property \(P.\) 

Now assume \(e=\{x,y\}\) satistfies Property \(P\)  and that there exists a minimal vertex cover \(Q\) with \(x,y \in Q.\)  
Note that neither \(x\) nor \(y\) can be a leaf, that is a vertex with a single neighbor, of \(G\) since \(Q\) is minimal. 
Moreover, since \(Q\) is minimal, there must exist an edge \(\{a,x\}\) with \(a \not\in Q\) and an edge \(\{y,b\}\) with \(b \not\in Q.\) 
By Property \(P,\) \(a \neq b\) and \(\{a,b\} \in E(G).\) 
But \(\{a,b\}\) is not covered by \(Q,\) a contradiction. 
Thus no minimal vertex cover can contain both \(x\) and \(y,\) so \(\{x,y\}\) is a regular edge.
\end{proof}

\begin{corollary}\label{leaf-cond}
Let \(G\) be a graph and \(f = \{x,y\} \) an edge of \(G\) with \(y\) a leaf. Then an edge \(e=\{x,z\}\) is a regular edge if and only if \(z\) is a leaf.
\end{corollary}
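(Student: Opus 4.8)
The plan is to pass through Theorem~\ref{good=P}, which identifies regularity of an edge with Property \(P\), and then to read off Property \(P\) for \(e=\{x,z\}\) directly from the hypothesis that \(y\) is a leaf. Throughout I will use the reformulation in Lemma~\ref{induced}: the edge \(\{x,z\}\) has Property \(P\) exactly when, for every \(a\in N_G(x)\setminus\{z\}\) and every \(b\in N_G(z)\setminus\{x\}\), the induced subgraph on \(\{a,x,z,b\}\) is a \(4\)-cycle; in particular \(\{a,b\}\in E(G)\). I will also dispose of the degenerate case \(z=y\) at the outset: there \(e=f\), and the equivalence is trivial since \(y=z\) is assumed to be a leaf. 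Hence from now on I may assume \(z\neq y\), which guarantees \(y\in N_G(x)\setminus\{z\}\).

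For the backward direction, suppose \(z\) is a leaf. Then \(N_G(z)=\{x\}\), so \(N_G(z)\setminus\{x\}=\emptyset\) and the universally quantified condition of Lemma~\ref{induced} is vacuously satisfied. Hence \(e=\{x,z\}\) has Property \(P\), and by Theorem~\ref{good=P} it is a regular edge.

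For the forward direction, suppose \(e=\{x,z\}\) is regular, hence has Property \(P\) by Theorem~\ref{good=P}, and suppose toward a contradiction that \(z\) is not a leaf. Then there exists some \(b\in N_G(z)\setminus\{x\}\). Taking \(a=y\in N_G(x)\setminus\{z\}\), Lemma~\ref{induced} forces \(\{y,b\}\in E(G)\). But \(y\) is a leaf with \(N_G(y)=\{x\}\), while \(b\neq x\); thus \(\{y,b\}\notin E(G)\), a contradiction. Therefore \(z\) must be a leaf.

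The argument is short, so the real points to watch are the boundary cases rather than any deep obstruction. One must separate out \(z=y\) so that \(y\) is a legitimate choice of \(a\) (that is, \(y\neq z\)), and one must use that the unique neighbor of the leaf \(y\) is \(x\neq b\) in order to contradict \(\{y,b\}\in E(G)\). No appeal to associated primes or vertex covers is needed once Theorem~\ref{good=P} is in hand; everything reduces to the trivial neighbor structure of the leaf \(y\).
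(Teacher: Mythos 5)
Your proposal is correct and follows essentially the same route as the paper: both directions reduce to Property \(P\) via Theorem~\ref{good=P}, with the backward direction holding trivially because a leaf endpoint makes the condition vacuous, and the forward direction producing a neighbor \(b\) (the paper's \(w\)) of \(z\) that Property \(P\) forces to be adjacent to the leaf \(y\), a contradiction. Your extra care with the degenerate case \(z=y\) is harmless but not needed, since in the forward direction ``\(z\) not a leaf'' already forces \(z\neq y\).
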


\begin{proof}
Note that if \(z\) is a leaf, then \(e\) vacuously satisfies Property \(P\) and so is a regular edge.
For the other implication assume that \(z\) is not a leaf. Then there exists an edge \( \{z,w\}\) with \(w \ne x\). 
If \(e\) is a regular edge then by Theorem \ref{good=P}, \(e\) has Property \(P\) and hence there is an edge \(\{y,w\}\) contradicting the assumption that \(y\) is a leaf. 
So, if \(e\) is a regular edge then \(z\) is a leaf.
\end{proof}

Corollary \ref{leaf-cond} has an interesting implication. An edge containing a leaf is always a regular edge since it will satisfy Property $P$ automatically. However, in seeking regular sequences formed by edges, Corollary \ref{leaf-cond} indicates that we need to focus on disjoint edges. Indeed, if $e=\{x,y\}$ is a regular edge of a graph $G$, then $R/(I,x+y)$ is effectively (up to isomorphism) the edge ring of a graph $G'$ with a loop replacing $e$ and $x$ and $y$ identified (see \cite{FHM}). After polarizing the loop, there is an edge $e'=\{x,y'\}$ where $y'$ is a leaf and $N_{G'}(x) = N_G(x) \cup N_G(y).$ Since no edge of $G'$ containing $x$ can be regular, this means no edge of $G$ containing $x$ or $y$ can be regular on (the polarization of) $R/(I,x+y)$, hence the need to focus on disjoint edges to form regular sequences.

\begin{example}
Let $G$ be a star graph with central vertex $x$ and leaves $y_1, \ldots, y_t.$ Then every edge $\{x,y_i\}$ is a regular edge, but since $\depth(R/I(G))=1$, these edges cannot be combined to form a regular sequence of length greater than one. Note that $R/(I,x+y_1) \cong R/J$ where $J$ is the graph with a loop shown below, and $J^{pol} = I$.

$$
\begin{tabular}{cc}
\begin{tikzpicture}
\tikzstyle{point}=[inner sep=0pt]
\node (x)[point, label=above:{$x$}] at (1,.9) {};
\node (y1)[point, label=left:{$y_1$}] at (0,0) {};
\node (y2)[point, label=left:{$y_2$}] at (2,0) {};
\node (y3)[point, label=right:{$y_3$}] at (2,1.5) {};
\node (yt)[point, label=left:{$y_t$}] at (0,1.5) {};
\node (dots)[point, label=below:{$\cdots$}] at (1,2) {};

\draw[black, fill=black] (x) circle(0.05);
\draw[black, fill=black] (y1) circle(0.05);
\draw[black, fill=black] (y2) circle(0.05);
\draw[black, fill=black] (y3) circle(0.05);
\draw[black, fill=black] (yt) circle(0.05);

\draw (x.center) -- (y1.center);
\draw (x.center) -- (y2.center);
\draw (x.center) -- (y3.center);
\draw (x.center) -- (yt.center);
\end{tikzpicture}

&

\begin{tikzpicture}[every loop/.style={min distance=10mm, in=190, out=240, looseness=10}]
\tikzstyle{point}=[inner sep=0pt]
\node (x)[point, loop below,  label=above:{$x$}] at (1,.9) {};
\node (y2)[point, label=left:{$y_2$}] at (2,0) {};
\node (y3)[point, label=right:{$y_3$}] at (2,1.5) {};
\node (yt)[point, label=left:{$y_t$}] at (0,1.5) {};
\node (dots)[point, label=below:{$\cdots$}] at (1,2) {};

\draw[black, fill=black] (x) circle(0.05);
\draw[black, fill=black] (y2) circle(0.05);
\draw[black, fill=black] (y3) circle(0.05);
\draw[black, fill=black] (yt) circle(0.05);


\draw (x) edge [anchor=center, loop below] (x);
\draw (x.center) -- (y2.center);
\draw (x.center) -- (y3.center);
\draw (x.center) -- (yt.center);
\end{tikzpicture}
\\

$G$ a star graph & $J$ a graph with a loop

\end{tabular}
$$
\end{example}

To avoid this type of situation, we will focus on disjoint edges in later sections when forming regular sequences. Note that when we quotient with the first regular edge, the graph associated to $R/(I,x+y)$ has a loop. Therefore, we first expand our focus to include graphs with loops.

\section{Property $P$ and Graphs with Loops}

The literature surrounding Property $P$ has focused on graphs without loops. In order to be able to apply the property inductively later in this paper, we will need to consider graphs that have loops. To this end, we extend the definition of Property $P$ and results from Section~\ref{regular} to graphs with loops. A graph $G$ has a loop at $x \in V(G)$ if $\{x,x\} \in E(G)$. We write $\{x^2\}\in E(G)$ in this case. We will not consider a vertex with a loop to be a leaf. That is, a leaf is a vertex contained in a single edge that is not a loop.

\begin{definition}\label{Property P loops}
Let \(G\) be a graph, potentially with loops, and \(e=\{x,y\} \in E(G).\)  Then \(e\) is said to have {\em Property P} if  $\{x^2\}, \{y^2\} \not\in E(G)$ and for any \(a,b \in V(G)\) with \(\{a,x\}, \{y,b\} \in E(G)\) we have \(a \neq b\) and \(\{a,b\} \in E(G).\)
\end{definition}

Note that with this definition, the situation with induced subgraphs is similar to that presented in Lemma~\ref{induced}, but the induced subgraph might also contain loops. More precisely, if  \(G\) is a graph potentially with loops and \(e=\{x,y\} \in E(G)\) then \(e\) has Property \(P\) if and only if for every \(a \in N_G(x) \setminus \{y\}\) and \(b \in N_G(y) \setminus \{x\}\) the induced subgraph on \( \{a,x,y,b\} \) is a \(4\)-cycle, potentially with loops at $a$ and $b$. However, using Definition~\ref{Property P loops}, more care needs to be taken to ensure an edge with Property \(P\) is regular. As a first step, we observe that the vertices of a regular edge cannot have loops.

\begin{lemma}\label{regular-no-loop}
Let \(G\) be a graph, potentially with loops, and \(e=\{x,y\}\) an edge of \(G.\) If \(e\) is a regular edge, then \(G\) does not contain a loop at \(x\) or at \(y\). Moreover, at most one of \(N_G(x)\) or \(N_G(y)\) contains vertices with loops.
\end{lemma}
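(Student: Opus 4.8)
The plan is to convert the combinatorial hypothesis into a statement about nonzerodivisors and then exhibit explicit annihilators. By Lemma~\ref{regular sums}, the edge $e=\{x,y\}$ is regular exactly when $x+y$ (equivalently $x-y$) is a nonzerodivisor on $R/I(G)$, so I would argue the contrapositive of each conclusion: from a loop I would produce a nonzero element of $R/I(G)$ annihilated by $x+y$, forcing $x+y$ into some prime in $\Ass(R/I(G))$ and hence $e$ to be non-regular. For Part~1 (no loop at $x$ or $y$), suppose $\{x^2\}\in E(G)$. Then both $x^2$ and $xy$ are generators of $I(G)$, so in $\overline R=R/I(G)$ we have $\bar x^{2}=0$ and $\overline{xy}=0$, whence $(x+y)\bar x=\overline{x^{2}}+\overline{xy}=0$. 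Since $x\notin I(G)$ for degree reasons, $\bar x\neq 0$, so $x+y$ is a zerodivisor and $e$ is not regular; a loop at $y$ is symmetric. This is exactly the mechanism that forces the endpoints of a regular edge to be loop-free before any neighbor analysis begins.

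For Part~2 the engine is the same, now applied to loop vertices in the neighborhoods. A vertex $a$ carrying a loop satisfies $a^{2}\in I(G)$, and an adjacency $\{a,x\}\in E(G)$ gives $ax\in I(G)$. First suppose a loop vertex $a$ lies in $N_G(x)\cap N_G(y)$ (note $a\neq x,y$ by Part~1, since $x,y$ are loop-free). Then $\overline{ax}=\overline{ay}=0$, so $(x+y)\bar a=\overline{ax}+\overline{ay}=0$ with $\bar a\neq 0$, contradicting regularity; thus no loop vertex can be a common neighbor of $x$ and $y$. Next, for loop vertices $a\in N_G(x)$ and $b\in N_G(y)$ with $a\neq b$, the natural candidate annihilator is $\overline{ab}$: indeed $(x+y)ab=(ax)b+(yb)a\in I(G)$, so $(x+y)\overline{ab}=0$, and $\overline{ab}\neq 0$ precisely when $ab\notin I(G)$, i.e.\ when $\{a,b\}\notin E(G)$. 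Together these two constructions rule out a loop vertex in each neighborhood whenever the two loop vertices coincide or are non-adjacent.

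The main obstacle I anticipate is exactly the remaining configuration: two \emph{distinct} loop vertices $a\in N_G(x)$ and $b\in N_G(y)$ with $\{a,b\}\in E(G)$. Here $\overline{ab}=0$, so the degree-two certificate above vanishes and non-regularity cannot be read off from this simple element. This is the delicate case, and it is where the surrounding graph structure must enter rather than the loop data alone; I would look to combine the induced-subgraph constraints carried by a regular edge (the loop analogues of Lemmas~\ref{no-triangle} and~\ref{no-3-path}) with the adjacency $\{a,b\}$ to force the desired contradiction, or to identify the precise hypothesis on neighbors under which the conclusion is claimed. I expect this step to require the most care, since it is the point at which the interaction between loops at $a,b$ and the edge joining them—rather than either loop in isolation—controls whether $x+y$ remains a nonzerodivisor.
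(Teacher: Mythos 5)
Your route differs from the paper's: the paper derives both claims from the structure theorem for associated primes of edge ideals with loops, \cite[Corollary 4.14]{MV} --- a loop at \(x\) gives an associated prime containing \(N_G[x]\), and loops at \(a\in N_G(x)\), \(b\in N_G(y)\) are claimed to give an associated prime containing \(N_G[a]\cup N_G[b]\), which in either case contains both \(x\) and \(y\) --- whereas you exhibit explicit annihilators and conclude via Lemma~\ref{regular sums}. Your three certificates (\(\bar x\) when \(x\) carries a loop, \(\bar a\) when a loop vertex \(a\) is a common neighbor, \(\overline{ab}\) when \(a\in N_G(x)\), \(b\in N_G(y)\) are distinct loop vertices with \(\{a,b\}\notin E(G)\)) are all valid: each is a nonzero class in \(R/I(G)\) killed by \(x+y\). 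So Part 1 is settled, and those two configurations of Part 2 are settled, by an argument that is more elementary and self-contained than the paper's.

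The configuration you could not handle --- distinct loop vertices \(a\in N_G(x)\), \(b\in N_G(y)\) with \(\{a,b\}\in E(G)\) --- is a genuine gap in your proposal, but it is not a gap you could have filled: the ``Moreover'' assertion is false in exactly that configuration. Take \(V(G)=\{x,y,a,b\}\), edges \(\{x,y\},\{x,a\},\{y,b\},\{a,b\}\), and loops at \(a\) and \(b\), so that \(I(G)=(xy,ax,by,ab,a^2,b^2)\) in \(R=k[x,y,a,b]\). Then \(I(G)=(x,b,a^2)\cap(y,a,b^2)\) is an irredundant primary decomposition (both ideals are primary, with radicals \((x,a,b)\) and \((y,a,b)\), and the intersection is easily checked on monomials), so \(\Ass(R/I(G))=\{(x,a,b),\,(y,a,b)\}\). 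Each of these primes contains exactly one of \(x,y\), hence \(\{x,y\}\) is a regular edge and \(x+y\) is a nonzerodivisor, yet both \(N_G(x)\) and \(N_G(y)\) contain loop vertices. The error lies in the paper's treatment of precisely your hard case: when \(\{a,b\}\in E(G)\), \cite[Corollary 4.14]{MV} cannot supply an associated prime containing \(N_G[a]\cup N_G[b]\), because (passing to the polarization, where the loops become whiskers \(aa'\), \(bb'\)) any vertex cover omitting both \(a\) and \(b\) would leave the edge \(\{a,b\}\) uncovered; in the example the minimal covers of the polarization are \(\{a,b,x\},\{a,b,y\},\{a,b',y\},\{a',b,x\}\), and none of their depolarizations contains all of \(x,y,a,b\). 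So your closing instinct to ``identify the precise hypothesis on neighbors under which the conclusion is claimed'' is exactly right: what your certificates actually prove --- and what is true --- is that for a regular edge \(\{x,y\}\), any loop vertices \(a\in N_G(x)\) and \(b\in N_G(y)\) must be distinct and adjacent; the stronger statement in the lemma, and the forward direction of Theorem~\ref{good=P loops} which rests on it, require that residual case to be excluded by hypothesis rather than proved.
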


\begin{proof}
First assume \(e=\{x,y\}\) is a regular edge of a graph \(G.\) Since $e$ is regular, for any associated prime $Q$, \(x\) and \(y\) cannot both be in $Q$. If $\{x^2\}$ is an edge of $G$, that is, $G$ has a loop at $x$, then there is an associated prime $Q$ of $R/I(G)$ that contains $N_G[x]$ by \cite[Corollary 4.14]{MV}. Since $x,y \in N_G[x]$, this is a contradiction to $e$ being regular, so $x^2 \not\in E(G)$. Similarly, $y^2 \not\in E(G)$. Now assume there exist $a \in N_G(x)$ and $b \in N_G(y)$ with $a^2, b^2 \in E(G)$. By \cite[Corollary 4.14]{MV}, there exists an associated prime $Q$ of $G$ containing $N_G[a] \cup N_G[b]$. Since $x \in N_G[a]$ and $y\in N_G[b]$, this is a contradiction. Thus at most one of $N_G(x), N_G(y)$ contains loops.
\end{proof}

We now establish that the analogs of Lemmas~\ref{no-triangle} and~\ref{no-3-path} hold for graphs with loops. Note that both Definition~\ref{regular} and Lemma~\ref{regular sums} are based on associated primes and hold for graphs with loops.

\begin{lemma}\label{no-triangle-loops}
If \(G\) is a graph, potential with loops, and \(\{x,y\}\) is a regular edge of \(G,\) then \(\{x,y\}\) is not contained in any triangle in \(G.\)
\end{lemma}

\begin{proof}
Suppose \(b \in V(G)\) is such that \(x,y,b\) form the vertices of a triangle in \(G.\) Then \(x,y \in N_G[b]\).  If \(G\) has a loop at \(b\), then there is an associated prime \(Q\) of \(R/I(G)\) that contains \(N_G[b]\) by \cite[Corollary 4.14]{MV}. Then \(x,y \in Q\), a contradiction.
 If \(G\) does not have a loop at \(b\), then \(W=V(G)\setminus \{b\}\) is a vertex cover of \(G.\) 
Thus there exists \(Q \subseteq W\) such that \(Q\) is a minimal vertex cover of \(G.\) Since \(b \not\in W,\) then \(x\in Q\) because \(\{x,b\}\in E(G)\) and \(y \in Q\) because \(\{y,b\}\in E(G).\)
This contradicts the definition of \(\{x,y\}\) being a regular edge. Thus no such \(b\) exists.
\end{proof}

\begin{lemma}\label{no-3-path-loops}
If \(G\) is a graph, potentially with loops, and \(\{x,y\}\) is a regular edge of \(G,\) then for any \(a,b \in V(G)\) with \(a \in N_G(x)\setminus \{y\}\) and \(b \in N_G(y) \setminus \{x\},\) the induced subgraph of \(G\) on the vertices \(a,x,y,b\) is a \(4\)-cycle with at most one loop. In particular, if \(a \in N_G(x)\) and \(b\in N_G(y),\) then \(\{a,b\}\in E(G).\) 
\end{lemma}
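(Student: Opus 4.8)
The plan is to follow the structure of the loopless proof of Lemma~\ref{no-3-path}, inserting extra care at each point where a loop could obstruct the vertex-cover manipulations. Throughout, $\{x,y\}$ is the regular edge and $a\in N_G(x)\setminus\{y\}$, $b\in N_G(y)\setminus\{x\}$ are fixed. First I would dispose of the triangle obstructions exactly as in the loopless case: if $a=b$, or $\{a,y\}\in E(G)$, or $\{x,b\}\in E(G)$, then $\{x,y\}$ lies in a triangle, contradicting Lemma~\ref{no-triangle-loops}. Hence $a\neq b$, $\{a,y\}\notin E(G)$, and $\{x,b\}\notin E(G)$, so the only chord that could stop the induced subgraph from being a $4$-cycle is the missing edge $\{a,b\}$. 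The loops are already controlled by Lemma~\ref{regular-no-loop}, which forces $x$ and $y$ to be loopless and permits a loop on at most one of $a,b$; thus it remains only to prove $\{a,b\}\in E(G)$.

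The verification of $\{a,b\}\in E(G)$ splits according to loops. When neither $a$ nor $b$ carries a loop, $W=V(G)\setminus\{a,b\}$ is still a vertex cover of $G$ (every loop of $G$ sits at a vertex of $W$), so it contains a minimal vertex cover $Q$; since $a,b\notin Q$ while $\{a,x\},\{y,b\}\in E(G)$, we get $x,y\in Q$, contradicting that $\{x,y\}$ is regular via Lemma~\ref{regular sums}. This is verbatim the loopless argument.

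The genuine new content, and the step I expect to be the main obstacle, is when one endpoint carries a loop. By Lemma~\ref{regular-no-loop} at most one of $a,b$ can, so after relabelling (the statement is symmetric under swapping $x\leftrightarrow y$ and $a\leftrightarrow b$) I may assume $a$ has a loop and $b$ does not. Here the deletion trick fails: $a$ must lie in every vertex cover in order to cover its loop, so removing $a$ no longer yields a cover. Instead I would build an associated prime containing both $x$ and $y$ directly. Using \cite[Corollary 4.14]{MV}, as in Lemmas~\ref{regular-no-loop} and~\ref{no-triangle-loops}, the loop at $a$ produces an associated prime forced to contain $N_G[a]$, hence $x$, since $\{a,x\}\in E(G)$. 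The point is to arrange simultaneously that $y$ lies in this prime. Assuming $\{a,b\}\notin E(G)$, both $y$ and $b$ survive in $G'=G\setminus N_G[a]$ (neither is adjacent to $a$, by Step~1 and the standing assumption), and the edge $\{y,b\}$ persists in $G'$. Choosing a minimal vertex cover $C$ of $G'$ inside $V(G')\setminus\{b\}$ (legitimate because $b$ is loopless) places $y$ in $C$, and adjoining $N_G[a]$ yields an associated prime containing both $x$ and $y$. This contradicts regularity and forces $\{a,b\}\in E(G)$.

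Assembling the pieces, the induced subgraph on $\{a,x,y,b\}$ has exactly the four edges $\{a,x\},\{x,y\},\{y,b\},\{b,a\}$, no chords by Step~1, and at most one loop by Lemma~\ref{regular-no-loop}; that is, it is a $4$-cycle with at most one loop. The delicate point to check carefully is that \cite[Corollary 4.14]{MV} indeed supplies an associated prime of the shape $N_G[a]\cup C$ for a prescribed minimal vertex cover $C$ of $G\setminus N_G[a]$. If that precise form is not immediate from the cited corollary, the fallback is to polarize every loop: the polarization $G^{pol}$ is loopless, satisfies $N_{G^{pol}}(x)=N_G(x)$ and $N_{G^{pol}}(y)=N_G(y)$ (as $x,y$ are loopless), adds no edge between original vertices, and keeps $\{x,y\}$ regular, so applying the loopless Lemma~\ref{no-3-path} to $G^{pol}$ and reading the conclusion back to $G$ recovers $\{a,b\}\in E(G)$.
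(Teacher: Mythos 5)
Your proposal is correct and takes essentially the same approach as the paper: the same triangle elimination via Lemma~\ref{no-triangle-loops}, the same deletion argument $W=V(G)\setminus\{a,b\}$ when neither $a$ nor $b$ has a loop, and, when $a$ carries a loop, the same construction of an associated prime containing $N_G[a]$ together with a minimal vertex cover of the induced subgraph on $V(G)\setminus N_G[a]$ chosen to avoid $b$, using \cite[Corollary 4.14]{MV}. The polarization fallback you mention is not needed, as the paper relies directly on that associated-prime construction.
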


\begin{proof}
First, notice that if \(a = b,\) or  \(\{a,y\}\in E(G),\) or \(\{x,b\}\in E(G)\) then there is a triangle of \( G\) containing \( \{x,y\}\), contradicting Lemma~\ref{no-triangle-loops}. Thus \(a \ne b,\) and  \( \{a,y\}\not\in E(G),\) and \(\{x,b\}\not\in E(G).\) 
By Lemma~\ref{regular-no-loop}, at most one of the vertices in the set \(\{a,x,y,b\}\) contains a loop. If such a loop exists, it is either a loop at \(a\) or at \(b\). 

First assume neither \(a\) nor \(b\) has a loop.
Then if $\{a,b\}\not\in E(G)$, set $W=V\setminus \{a,b\}$. Since $W$ is a vertex cover of $G,$ it contains a minimal vertex cover $Q$. 
By definition, $x \in Q$ since $a \not\in Q$ and $\{x,a\}\in E(G)$ and $y\in Q$ since $b\not\in Q$ and $\{y,b\}\in E(G)$. 
Thus $x,y \in Q$, a contradiction. 

Assume \(G\) has a loop at \(a\) and \(\{a,b\} \not\in E(G).\)  Consider \(W=V(G) \setminus (N_G[a] \cup \{b\})\).  Then \(W\) contains a minimal vertex cover \(K\) of the induced subgraph of \(G\) on \(V(G) \setminus N_G[a].\) Now \(y\in K\) since \(\{b,y\} \in E(G)\) and \(b \not\in K.\)
By \cite[Corollary 4.14]{MV}, there is an associated prime \(Q\) of \(G\) containing \(N_{G}[a] \cup K.\) Since \(x \in N_G[a]\) and \(y\in K\), we have \(x,y \in Q\), a contradiction.
 Hence $\{a,b\} \in E(G)$ and  the induced subgraph of \( G\) on \(a,x,y,b\) is a \(4\)-cycle with at most one loop.
\end{proof}

We are now ready to prove the extension to graphs with loops of the connection between Property \(P\) and regularity.

\begin{theorem} \label{good=P loops}
Let \(G\) be a graph, potentially with loops, and \(e=\{x,y\}\) an edge of \(G.\) Then \(e\) is a regular edge if and only if \(e\) satisfies Property \(P\) and at most one of \(N_G(x)\) or \(N_G(y)\) contains vertices with loops.
\end{theorem}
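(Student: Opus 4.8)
The plan is to prove both directions, leaning on the work already done for loopless graphs in Theorem~\ref{good=P} and on the loop-specific lemmas just established. For the forward direction, suppose $e=\{x,y\}$ is a regular edge. Lemma~\ref{regular-no-loop} immediately gives the ``at most one of $N_G(x)$ or $N_G(y)$ contains vertices with loops'' clause, and also tells us there are no loops at $x$ or $y$ themselves. To see that $e$ satisfies Property~$P$, I would invoke the loop-aware versions of the earlier lemmas: by Lemma~\ref{no-3-path-loops}, for every $a\in N_G(x)\setminus\{y\}$ and $b\in N_G(y)\setminus\{x\}$ we have $a\neq b$ and $\{a,b\}\in E(G)$, which is exactly the content of Property~$P$ in Definition~\ref{Property P loops} (the requirement $\{x^2\},\{y^2\}\notin E(G)$ being supplied by Lemma~\ref{regular-no-loop}). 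So the forward direction is essentially an assembly of the three preceding lemmas.

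For the converse, assume $e=\{x,y\}$ has Property~$P$ and that at most one of $N_G(x)$, $N_G(y)$ contains vertices with loops. I want to show $e$ is regular, i.e. no associated prime $Q$ of $R/I(G)$ contains both $x$ and $y$. Associated primes now come in two flavors: those arising from minimal vertex covers of the loopless part, and those of the form $N_G[v]\cup K$ coming from loops via \cite[Corollary 4.14]{MV}. I would split into cases accordingly. For an associated prime of minimal-vertex-cover type, the argument of Theorem~\ref{good=P} carries over verbatim: if both $x,y\in Q$, minimality forces neighbors $a\notin Q$ of $x$ and $b\notin Q$ of $y$, but Property~$P$ gives $\{a,b\}\in E(G)$ uncovered by $Q$, a contradiction. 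The new work is ruling out the loop-type associated primes.

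The main obstacle is exactly this loop-type case, and it is where the hypothesis on $N_G(x),N_G(y)$ does its work. Suppose some associated prime $Q=N_G[v]\cup K$ (from a loop at $v$) contained both $x$ and $y$. Since $\{x^2\},\{y^2\}\notin E(G)$ by Property~$P$, we have $v\neq x,y$, so $x$ and $y$ lie in $N_G(v)\cup K$. The hypothesis that at most one of $N_G(x),N_G(y)$ meets a looped vertex should prevent $v$ from being adjacent to both $x$ and $y$: if $v\in N_G(x)\cap N_G(y)$ then $v$ is a looped common neighbor, giving a triangle on $\{x,y,v\}$ with a loop, which contradicts Property~$P$ (it would force $v=v$ to play the role of both $a$ and $b$, violating $a\neq b$) or directly contradicts the loop hypothesis. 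The remaining subcase, where $v$ is adjacent to one of $x,y$ and the other lies in the independent-set part $K$, is the delicate one; here I would argue that Property~$P$ forces a neighbor of the $K$-vertex to be a looped vertex in the opposite neighbor set, contradicting the ``at most one side has loops'' hypothesis. Carefully tracking which vertices are forced into $N_G[v]$ versus $K$, and showing every configuration collides with either Property~$P$ or the loop hypothesis, is the crux of the proof and the step I expect to require the most care.
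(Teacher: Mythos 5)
Your forward direction is complete and is exactly the paper's: Lemma~\ref{regular-no-loop} supplies both loop conditions, and Lemmas~\ref{no-triangle-loops} and~\ref{no-3-path-loops} supply the two requirements of Property $P$ in Definition~\ref{Property P loops}. The converse also begins on the paper's track: associated primes are split into minimal vertex covers (handled verbatim as in Theorem~\ref{good=P}) and embedded primes built from loops via \cite[Corollary 4.14]{MV}, and your subcase where a looped vertex is a common neighbor of $x$ and $y$ is correctly dispatched. However, the subcase you yourself flag as the crux --- one of $x,y$, say $x$, adjacent to a loop $a$ while $y$ lies in the extension $K$ --- is left open, and the argument you sketch for it would fail: Property $P$ does not force any neighbor of $y$ to be \emph{looped}, so you cannot contradict the ``at most one side has loops'' hypothesis in this subcase. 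What Property $P$ actually forces is that every neighbor of $y$ other than $x$ is adjacent to $a$, hence lies in $N_G[a]\subseteq Q$. The contradiction must instead come from the minimality built into \cite[Corollary 4.14]{MV}: since $y$ is necessary to extend the union of closed neighborhoods of loops to a vertex cover, there is an edge $\{y,b\}$ with $b\not\in Q$; applying Property $P$ to $\{a,x\}$ and $\{y,b\}$ gives $\{a,b\}\in E(G)$, so $b\in N_G[a]\subseteq Q$, a contradiction. This is precisely how the paper closes the proof; the loop hypothesis is used only once, to rule out $x$ and $y$ both lying in the union of closed neighborhoods of looped vertices.

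Two smaller gaps in your case analysis: \cite[Corollary 4.14]{MV} allows unions of closed neighborhoods of \emph{several} loops, not just a single $N_G[v]$, so you must also handle $x$ and $y$ being adjacent to two different loops (again killed directly by the loop hypothesis); and you never treat the subcase where both $x$ and $y$ lie in $K$, which is disposed of by the same minimality-plus-Property-$P$ argument used for minimal vertex covers.
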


\begin{proof}
First assume \(e=\{x,y\}\) is a regular edge of a graph \(G.\) By Lemma~\ref{regular-no-loop}, \(G\) does not contain a loop at \(x\) or \(y\) and at most one of \(N_G(x), N_G(y)\) contains loops. If \(a,b \in V(G)\) with \(\{a,x\},\{y,b\} \in E(G)\) then by Lemma~\ref{no-triangle-loops}, \(a \ne b\) and by Lemma~\ref{no-3-path-loops}, \(\{a,b\}\in E(G).\)Thus the conditions for Property \(P\) in Definition~\ref{Property P loops} are satisfied.

Now assume \(e=\{x,y\}\) satistfies Property \(P\)  and and at most one of $N(x)$ or $N(y)$ contains vertices with loops. As in the proof of Theorem~\ref{good=P}, 
assume that there exists a minimal  vertex cover \(Q\) with \(x,y \in Q.\)  
Note that neither \(x\) nor \(y\) can be a leaf of \(G\) since \(Q\) is minimal. 
Moreover, since \(Q\) is minimal, there must exist an edge \(\{a,x\}\) with \(a \not\in Q\) and an edge \(\{y,b\}\) with \(b \not\in Q.\) 
By Property \(P,\) \(a \neq b\) and \(\{a,b\} \in E(G).\) 
But \(\{a,b\}\) is not covered by \(Q,\) a contradiction. 
Thus no minimal vertex cover can contain both \(x\) and \(y.\) 

By \cite[Corollary 4.14]{MV}, the associated primes of \(R/I(G)\) that are not minimal are constructed from (unions of) closed neighborhoods of loops extended to vertex covers of \(G\). By hypothesis, \(x\) and \(y\) are not both contained in the union of all closed neighborhoods of vertices with loops. Since \(x\) and \(y\) are not both contained in any minimal vertex cover, if \(x\) and \(y\) are both contained in an embedded associated prime \(Q\), then one of them, say \(x,\) is in the closed neighborhood of a loop \(a\) and the other, say \(y,\) is necessary to extend the the set formed from closed neighborhoods of loops to contain a vertex cover. That means there is an edge \(\{y,b\}\) with \(b \not\in Q\). By Property \(P\), \(\{a,b\}\in E(G)\) and \(b \in N[a] \subseteq Q\), a contradiction. 
Thus no associated prime of \(R/I(G)\) contains both \(x\) and \(y\), so \(\{x,y\}\) is a regular edge.
\end{proof}

We conclude this section with the analog of Corollary~\ref{leaf-cond} for graphs with loops. As before, this corollary is a motivating factor in focusing on distjoint edges.

\begin{corollary}\label{leaf-cond loops}
Let \(G\) be a graph and \(f = \{x,y\} \) an edge of \(G\) with \(y\) a leaf. Then an edge \(e=\{x,z\}\) is a regular edge if and only if \(z\) is a leaf and $x^2 \not\in E(G)$.
\end{corollary}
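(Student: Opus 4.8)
The plan is to adapt the proof of Corollary~\ref{leaf-cond}, replacing Theorem~\ref{good=P} by its loop analog Theorem~\ref{good=P loops} and carefully tracking the loop data, the single genuinely new feature being the extra clause \(x^2 \notin E(G)\). Throughout I would exploit the rigidity forced by leaves: since \(y\) is a leaf it carries no loop and its only edge is \(\{x,y\}\), and whenever \(z\) is a leaf we have \(N_G(z) = \{x\}\) and \(z^2 \notin E(G)\). This collapses almost every verification to a vacuous or automatic check.

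For the reverse implication, assume \(z\) is a leaf and \(x^2 \notin E(G)\). First I would check that \(e=\{x,z\}\) satisfies Property~\(P\) in the sense of Definition~\ref{Property P loops}: the requirement \(x^2, z^2 \notin E(G)\) holds by hypothesis and because a leaf bears no loop, while the neighbor condition is vacuous since \(N_G(z)\setminus\{x\} = \emptyset\) (via the induced-subgraph reformulation recorded immediately after Definition~\ref{Property P loops}). Second I would verify the loop hypothesis of Theorem~\ref{good=P loops}: because \(N_G(z) = \{x\}\) and \(x^2 \notin E(G)\), the set \(N_G(z)\) contains no looped vertex, so the condition that at most one of \(N_G(x), N_G(z)\) contains vertices with loops holds automatically. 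Theorem~\ref{good=P loops} then yields that \(e\) is a regular edge.

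For the forward implication, assume \(e=\{x,z\}\) is a regular edge. Lemma~\ref{regular-no-loop} immediately supplies \(x^2 \notin E(G)\) (as well as \(z^2 \notin E(G)\)), which gives one of the two required conclusions. To see that \(z\) is a leaf, I would argue by contradiction: if \(z\) is not a leaf then, since \(z\) has no loop, it has a neighbor \(w \neq x\). Applying Property~\(P\) (available from Theorem~\ref{good=P loops}) to \(y \in N_G(x)\setminus\{z\}\) and \(w \in N_G(z)\setminus\{x\}\) produces an edge \(\{y,w\} \in E(G)\); since \(w \neq x\) and one checks \(w \neq y\) (otherwise \(\{z,y\}\in E(G)\) would force \(z=x\)), this is a genuine second edge incident to \(y\), contradicting that \(y\) is a leaf. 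Hence \(z\) is a leaf.

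The argument is essentially routine once Theorem~\ref{good=P loops} is in hand, so the main point deserving care is the loop bookkeeping rather than any deep step. Concretely, I must confirm that the new standalone hypothesis \(x^2 \notin E(G)\) corresponds exactly to the obstruction to regularity furnished by Lemma~\ref{regular-no-loop}, and that the auxiliary ``at most one neighbor set contains loops'' clause of Theorem~\ref{good=P loops} is forced by the leaf \(z\) rather than needing an independent verification. This is precisely where the statement diverges from the loop-free Corollary~\ref{leaf-cond}, and it is the step I would make explicit.
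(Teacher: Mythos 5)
Your proposal is correct and follows essentially the same route as the paper's proof: the reverse direction checks that Property \(P\) holds vacuously for the leaf \(z\) and that the loop hypothesis of Theorem~\ref{good=P loops} is automatic, and the forward direction uses Property \(P\) to produce an edge \(\{y,w\}\) contradicting that \(y\) is a leaf. Your only additions are harmless bookkeeping the paper leaves implicit (citing Lemma~\ref{regular-no-loop} for \(x^2 \notin E(G)\), which the paper gets from Property \(P\) itself, and the check \(w \neq y\), which Definition~\ref{Property P loops} already guarantees via the clause \(a \neq b\)).
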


\begin{proof}
Note that if \(z\) is a leaf and $x^2 \not\in E(G)$, then \(e\)  satisfies Property \(P\). Since $z$ is a leaf, the conditions of Theorem~\ref{good=P loops} are satisfied and so $e$ is a regular edge.
For the other implication assume that \(z\) is not a leaf. Then there exists an edge \( \{z,w\}\) with \(w \ne x\). 
If \(e\) is a regular edge then by Theorem \ref{good=P loops}, \(e\) has Property \(P\) and hence there is an edge \(\{y,w\}\) contradicting the assumption that \(y\) is a leaf. 
So, if \(e\) is a regular edge then \(z\) is a leaf.
\end{proof}

\section{Matchings and Regular sequences of edges}\label{Regular Sequences}

The goal of this section is to form a special type of regular sequence using edges that satisfy Property $P$. In general, if $x+y$ is a regular element on $R/I(G)$ for a graph $G$, then $R/(I(G),x+y) \cong R/(J, x+y)$ where $J$ is the ideal formed from $I(G)$ by replacing $y$ by $x$ in each generator divisible by $y$ (see \cite[Theorem 2.6]{FHM}). Since the ideal $J$ is not square-free, indeed, it is the edge ideal of a graph with a loop at vertex $x$, we will define two new graphs that will allow us to toggle between working modulo the regular element and working with the loop-free graph corresponding to a polarization of $J$.

\begin{definition}\label{Ge}
Let $G$ be a graph, potentially with loops, and fix an edge $e=\{x,y\} \in E(G)$ with $x \neq y$. Define $G_e$ to be the graph with $V(G_e) = V(G)\setminus \{y\}$ and edges defined by:
\begin{itemize}
\item[(i)] $\{a,b\} \in E(G_e)$ if $\{a,b\} \in E(G)$ and $a,b \neq y$, or
\item[(ii)] $\{a,x\}\in E(G_e)$ if $\{a,y\} \in E(G)$.
\end{itemize}
\end{definition}

Note that since $\{x, y\}$ is an edge of $G$, the second criteria above gives that $\{x, x\}$ is an edge of $E(G_e)$, that is, $G_e$ has a loop at $x$. In addition, using the notation from the start of this section, $J = I(G_e)$.

\begin{definition}\label{polarizedGe}
Let $G$ be a graph, potentially with loops, and fix an edge $e=\{x,y\} \in E(G)$ with $x \neq y$. Define $G^e$ to be the graph with $V(G^e) = V(G)$ and edges defined by:
\begin{itemize}
\item[(i)] $\{a,b\} \in E(G^e)$ if $\{a,b\} \in E(G)$ and $a,b \neq y$, or
\item[(ii)] $\{a,x\}\in E(G^e)$ if $\{a,y\} \in E(G)$ and $a \ne x$, or
\item[(iii)] $\{x,y\}\in E(G^e)$.
\end{itemize}
\end{definition}

Note that if $G$ does not have loops, then $I(G^e) = (I(G_e))^{pol}$. In general, $I(G^e)$ is formed from $I(G_e)$ by polarizing only the loop created from $e$. For convenience, we will denote $(G_{e_1})_{e_2}$ as $G_{e_1,e_2}$ and $(G^{e_1})^{e_2}$ as $G^{e_1,e_2}$ in the remainder of the paper.

The next lemma shows how Property $P$ for an edge $f$ passes to $G_e$ and $G^e$ when $e$ and $f$ are disjoint edges. Note that in order to iterate the process of passing Property \(P\) through the contraction of multiple edges, the hypotheses of the following results allow for the graph $G$ to have loops.

\begin{lemma}\label{P descends}
Let $G$ be a graph, potentially with loops. Suppose $\{u,v\} \in E(G)$ has Property $P$ and $e=\{x,y\}$ is an edge of $G$ with $u,v,x,y$ distinct and the induced graph on $u,v,x,y$ is not a $4$-cycle, up to potential loops on \(x\) and \(y\). Then the image of $\{u,v\}$ has Property $P$ in both $G_e$ and $G^e$. 
\end{lemma}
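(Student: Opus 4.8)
The plan is to verify Definition~\ref{Property P loops} for the image of $\{u,v\}$ directly, organizing the argument around the identification map $\pi\colon V(G)\to V(G_e)$ that sends $y\mapsto x$ and fixes every other vertex, so that the edges of $G_e$ are exactly the $\pi$-images of the edges of $G$. Since $u,v\neq x,y$, the map fixes $u$ and $v$, and one checks that a vertex $\neq y$ is a neighbor of $u$ in $G_e$ precisely when it is a neighbor of $u$ in $G$, with the one addition that $x$ becomes a neighbor of $u$ in $G_e$ as soon as $\{u,x\}$ or $\{u,y\}$ lies in $E(G)$; the same holds for $v$. I would then observe that the neighborhoods of $u$ and $v$ in $G^e$ coincide with these, because in $G^e$ the vertex $y$ is attached only to $x$ (rules (ii) and (iii) of Definition~\ref{polarizedGe} replace every $\{a,y\}$ by $\{a,x\}$ and add only $\{x,y\}$), so $y\notin N_{G^e}(u)\cup N_{G^e}(v)$ while the edges among the remaining vertices agree with those of $G_e$. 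Thus a single argument applies verbatim to both graphs, together with the easy remark that neither construction creates a loop at $u$ or $v$ (loops are produced only at $x$), so the loop clause of Property $P$ survives.

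For the neighbor condition, take a neighbor $a$ of the image of $u$ and a neighbor $b$ of the image of $v$ in the new graph, with $a\neq v$ and $b\neq u$, and lift them along $\pi$: there are $a',b'\in V(G)$ with $\{u,a'\},\{v,b'\}\in E(G)$ and $\pi(a')=a$, $\pi(b')=b$, where $a'=a$ unless $a=x$, in which case $a'\in\{x,y\}$ (similarly for $b'$). If $a\neq x$ and $b\neq x$, then $a'=a$ and $b'=b$ are honest neighbors of $u$ and $v$ in $G$ distinct from $v$ and $u$, so Property $P$ of $\{u,v\}$ in $G$ gives $a\neq b$ and $\{a,b\}\in E(G)$, an edge that survives to both $G_e$ and $G^e$. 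If exactly one of them is $x$, say $a=x$ and $b\neq x$, I would apply Property $P$ in $G$ to the pair $(a',b)$ (valid since $\{u,a'\},\{v,b\}\in E(G)$) to get $\{a',b\}\in E(G)$, and push this forward under $\pi$ to obtain $\{x,b\}\in E(G_e)$ and $\{x,b\}\in E(G^e)$, with $a=x\neq b$.

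The hard part will be the case $a=b=x$, which would violate the requirement $a\neq b$ and which the hypothesis exists precisely to exclude. First I would record that Property $P$ of $\{u,v\}$ in $G$ forbids a common neighbor of $u$ and $v$: a vertex $w\in N_G(u)\cap N_G(v)$ applied to Property $P$ as the pair $(w,w)$ would force $w\neq w$. In particular $G$ contains neither both of $\{u,x\},\{v,x\}$ nor both of $\{u,y\},\{v,y\}$. Now $a=x$ being a neighbor of the image of $u$ means $\{u,x\}\in E(G)$ or $\{u,y\}\in E(G)$, and $b=x$ being a neighbor of the image of $v$ means $\{v,x\}\in E(G)$ or $\{v,y\}\in E(G)$. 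Intersecting these two disjunctions with the two exclusions just obtained leaves only $\{u,x\},\{v,y\}\in E(G)$ with $\{u,y\},\{v,x\}\notin E(G)$, or the symmetric configuration $\{u,y\},\{v,x\}\in E(G)$ with $\{u,x\},\{v,y\}\notin E(G)$; each is exactly an induced $4$-cycle on $\{u,v,x,y\}$ (namely $u\,x\,y\,v$ or $u\,y\,x\,v$, whose diagonals are the absent cross edges), up to loops on $x$ and $y$. This contradicts the hypothesis, so $a=b=x$ cannot occur. All cases being settled, the image of $\{u,v\}$ satisfies Property $P$ in both $G_e$ and $G^e$. In summary, the only real obstruction is this last case, and the role of the ``not a $4$-cycle'' hypothesis is precisely to prevent $x$ from becoming a common neighbor of $u$ and $v$ after the contraction.
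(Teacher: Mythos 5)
Your proof is correct and follows essentially the same route as the paper's: the same case analysis on whether the neighbors $a,b$ of $u,v$ in the new graph equal $x$, the same use of the no-common-neighbor consequence of Property $P$, and the same contradiction with the ``not an induced $4$-cycle'' hypothesis when $a=b=x$. The only notable difference is cosmetic but welcome: where the paper dispatches $G^e$ with ``an analogous argument,'' you observe that $N_{G^e}(u)=N_{G_e}(u)$ and $N_{G^e}(v)=N_{G_e}(v)$ (since $y$ is attached only to $x$ in $G^e$), so one argument covers both graphs at once.
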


\begin{proof}
Note that since $u,v,x,y$ are distinct, the image of $\{u,v\}$ in both $G_e$ and $G^e$ is again $\{u,v\}$ by condition $(i)$ of Definitions~\ref{Ge} and~\ref{polarizedGe}. Also by definition, since the vertices are distinct and $\{u,v\}$ has Property $P$ in $G$, then $\{u^2\}, \{v^2\}$ are not in $E(G_e)$ or $E(G^e)$. 

Now let $a,b \in V(G_e)$ with $\{a,u\}, \{v,b\} \in E(G_e)$. If neither $a$ nor $b$ is equal to $x$, then $\{a,u\}, \{v,b\} \in E(G)$ and thus $a \neq b$ and $\{a,b\}\in E(G)$ and so by Definition~\ref{Ge} $(i)$ $\{a,b\} \in E(G_e)$.

 If $a=x$ and $b=x$, then since in $G$ neither $x$ nor $y$ is a common neighbor of $u$ and $v$ since $\{u,v\}$ has Property $P$, precisely one of $\{a,u\}$ or $\{v,b\}$ was produced by situation $(ii)$ of Definition~\ref{Ge}. Without loss of generality, assume $\{v,b\} = \{v,x\}$ was produced by the edge $\{v,y\}$ in $E(G)$. Then the induced graph in $G$ on $u,v,x,y$ contains a $4-$cycle. Note that since $\{u,v\}$ has Property $P$ in $G$, $u,v$ do not have a common neighbor, meaning that the $4-$cycle is induced modulo potential loops on \(x\) and \(y\), a contradiction. 

Finally assume $a\neq x$ and $b = x$. Then either $\{v,x\}$ or $\{v,y\}$ is an edge of $G$. Then either $\{a,x\}$ or $\{a,y\}$ respectively is in $E(G)$ since $\{u,v\}$ has Property $P$. In either case, $\{a,x\}$ is an edge of $G_e$ and thus $\{u,v\}$ has Property $P$ in $G_e$.

An analgous argument holds for $G^e$.
\end{proof}

The following is the key lemma in considering regular sequences of edges. For convenience, set \(L=\{v\in V(G) \mid v^2 \in E(G)\}\) to be the set of loops in \(G\).

\begin{lemma}\label{two edges}
Suppose $\{x_1,y_1\}$ and $\{x_2,y_2\}$ are disjoint regular edges of a graph $G$, potentially with loops. Then  $\{x_2,y_2\}$ is a regular edge of $H=G_{\{x_1,y_1\}}$ if and only if 
\begin{itemize}
\item  if $N_G(x_2) \cap \{x_1,y_1\} \ne \emptyset$ then $N_G(y_2) \cap (\{x_1, y_1\} \cup L)=\emptyset$, and
\item  if $N_G(y_2) \cap \{x_1,y_1\} \ne \emptyset$ then $N_G(x_2) \cap (\{x_1, y_1\} \cup L)=\emptyset$.
\end{itemize}
Moreover, $x_1+y_1, x_2+y_2$ is a regular sequence on $R/I(G)$.
\end{lemma}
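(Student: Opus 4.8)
The plan is to reduce the biconditional to the loop-aware characterization of regular edges in Theorem~\ref{good=P loops}, applied to $H=G_{\{x_1,y_1\}}$, and then to deduce the regular-sequence statement from the ring isomorphism underlying the contraction that defines $G_e$. First I would record the structure of $H$. Since $\{x_1,y_1\}$ is regular, Lemma~\ref{regular-no-loop} guarantees $G$ has no loop at $x_1$ or $y_1$, so by Definition~\ref{Ge} the passage to $H$ deletes $y_1$, reroutes every edge $\{a,y_1\}$ to $\{a,x_1\}$, and creates exactly one new loop, the loop at $x_1$; thus the loop set of $H$ is $L\cup\{x_1\}$. Translating Definition~\ref{Ge}, I would compute $N_H(x_2)=(N_G(x_2)\setminus\{y_1\})\cup\{x_1 : y_1\in N_G(x_2)\}$ and symmetrically for $y_2$, recording the governing fact that $x_1\in N_H(x_2)$ if and only if $N_G(x_2)\cap\{x_1,y_1\}\neq\emptyset$. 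The new loop at $x_1$ is the only source of new obstructions, which is exactly why the conditions are phrased around $\{x_1,y_1\}\cup L$.

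For the forward direction (regularity in $H$ implies the two conditions) I would invoke only the loop clause of Theorem~\ref{good=P loops}. If $N_G(x_2)\cap\{x_1,y_1\}\neq\emptyset$, then $x_1\in N_H(x_2)$ and $x_1$ carries a loop in $H$, so $N_H(x_2)$ meets the loop set; regularity then forces $N_H(y_2)$ to be free of loop-vertices. I would check this is precisely the conclusion of the first condition: a vertex of $N_G(y_2)$ lying in $\{x_1,y_1\}$ would place the loop-vertex $x_1$ into $N_H(y_2)$, while a vertex $w\in N_G(y_2)\cap L$ (necessarily $w\neq y_1$, as $y_1$ has no loop) persists as a loop-vertex of $N_H(y_2)$; both are excluded. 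The second condition follows by the symmetry $x_2\leftrightarrow y_2$.

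For the backward direction I would first note that the two conditions forbid $x_2$ and $y_2$ from both meeting $\{x_1,y_1\}$, leaving two cases. When neither $x_2$ nor $y_2$ is adjacent to $\{x_1,y_1\}$, the neighbor sets are unchanged and both Property~$P$ and the loop clause transfer directly from $G$, using that any edge $\{a,b\}$ supplied by Property~$P$ in $G$ survives in $H$ since $a,b\neq y_1$. The delicate case, where I expect the main obstacle, is when, say, $N_G(x_2)\cap\{x_1,y_1\}\neq\emptyset$, so $x_1\in N_H(x_2)$ and one must verify Property~$P$ in $H$ for this rerouted neighbor. The point is that for any $b\in N_H(y_2)=N_G(y_2)$, Property~$P$ of $\{x_2,y_2\}$ in $G$ produces an edge joining $b$ to whichever vertex of $\{x_1,y_1\}$ lies in $N_G(x_2)$; if that vertex is $y_1$, rule (ii) of Definition~\ref{Ge} turns $\{y_1,b\}$ into the required edge $\{x_1,b\}$ of $H$, and if it is $x_1$ the edge $\{x_1,b\}$ is already present, while the remaining neighbors $a\neq x_1$ behave as in $G$. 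The loop clause holds because the first condition makes $N_H(y_2)$ loop-free, so exactly one of $N_H(x_2),N_H(y_2)$ meets the loop set; Theorem~\ref{good=P loops} then yields regularity in $H$.

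Finally, for the \emph{Moreover} statement I would tie the three conditions together. Since $\{x_1,y_1\}$ is a regular edge, $x_1+y_1$ is regular on $R/I(G)$ by Lemma~\ref{regular sums}. The substitution $y_1\mapsto -x_1$ of \cite[Theorem 2.6]{FHM} gives an isomorphism $R/(I(G),x_1+y_1)\cong R'/I(H)$, where $R'=k[V(G)\setminus\{y_1\}]$ and $I(H)=I(G_{\{x_1,y_1\}})$, under which $x_2+y_2\mapsto x_2+y_2$ because $x_2,y_2\neq y_1$. Hence $x_2+y_2$ is regular on $R/(I(G),x_1+y_1)$ if and only if it is regular on $R'/I(H)$, which by Lemma~\ref{regular sums} (valid for graphs with loops) holds if and only if $\{x_2,y_2\}$ is a regular edge of $H$, i.e. if and only if the two separability conditions hold. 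Combining this with the regularity of $x_1+y_1$ shows that $x_1+y_1,x_2+y_2$ is a regular sequence on $R/I(G)$ exactly under the stated conditions.
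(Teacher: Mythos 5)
Your proposal is correct and takes essentially the same approach as the paper: both directions are reduced to the loop-aware characterization in Theorem~\ref{good=P loops} applied to $H$ (new loop at $x_1$, the separability conditions controlling which of $N_H(x_2)$, $N_H(y_2)$ can meet the loop set), and the regular-sequence claim follows from the isomorphism $R/(I(G),x_1+y_1)\cong R'/I(H)$ of \cite[Theorem 2.6]{FHM} together with Lemma~\ref{regular sums}. The only cosmetic difference is that where the paper transfers Property $P$ to $H$ by citing Lemma~\ref{P descends} (after noting the conditions rule out a $4$-cycle on $\{x_1,y_1,x_2,y_2\}$), you re-derive that transfer inline via the rerouting of $\{y_1,b\}$ to $\{x_1,b\}$, which is precisely the content of that lemma.
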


\begin{proof}
 Set $e=\{x_1,y_1\}$. We need to show the image of $\{x_2,y_2\}$ is a regular edge of $H=G_e$. By Theorem~\ref{good=P loops}, \(\{x_2,y_2\}\) has Property \(P\) in \(G\).
 By Definition~\ref{Property P loops}, neither $x_2$ nor $y_2$ has a loop in $G$. By definition, there is only one new loop in $G_e$, which is located at $x_1$. Thus in $H$, neither $x_2$ nor $y_2$ has a loop.  By Theorem~\ref{good=P loops} it is enough to show that $\{x_2,y_2\}$ satisfies Property $P$ in the graph $H$ and at most one of $N_H(x_2)$ and $N_H(y_2)$ contains a loop. By hypothesis, at most one of these neighbor sets contains $x_1$ and the other set does not contain a loop in $G$ or in $H$.  Note that the induced graph in \(G\) on \(\{x_1,y_1,x_2,y_2\}\) is not a $4$-cycle by the conditions imposed on \(N_G(x_2)\) and \(N_G(y_2)\). By regularity, there are no loops on any of these $4$ vertices. Thus by Lemma~\ref{P descends}, \(\{x_2,y_2\}\) has Property \(P\) in \(H\).

For the converse, assume \(\{x_2,y_2\}\) is a regular edge of \(H\) and  \(N_G(x_2) \cap \{x_1,y_1\} \ne \emptyset\). Then \(\{x_1,x_2\} \in E(H)\). Recall that there is a loop at \(x_1\) in \(H\). Suppose  \(N_G(y_2) \cap (\{x_1, y_1\} \cup L) \ne \emptyset\). Then there is a vertex \(a \in \{x_1, y_1\} \cup L\) with \(\{a,y_2\} \in E(G).\) If \(a=x_1\) or \(a=y_1\), then \(\{x_1,y_2\}\in E(H)\). Otherwise, \(\{a,y_2\}\in E(H).\) In either case, in \(H\), both \(x_2\) and \(y_2\) are adjacent to loops, a contradiction to Theorem~\ref{good=P loops} since \(\{x_2,y_2\}\) is a regular edge of \(H\).

Finally, by Lemma~\ref{regular sums} and the comment following Lemma~\ref{regular-no-loop}, $x_1+y_1$ is regular on $R/I(G)$. Now 
$$R/(I(G),x_1+y_1) \cong R/((I(H), x_1+y_1) \cong R'/I(H)$$
where $R'$ is a polynomial ring in one fewer variable (see the proof of \cite[Theorem 2.6]{FHM}). More specifically, the variables of $R'$ are $V(H)=V(G_{\{x_1,y_1\}})=V(G)\setminus\{y_1\}$. Since $\{x_2,y_2\}$ is a regular edge of $H$, the final statement follows.
\end{proof}

Note that the edges being disjoint in Lemma~\ref{two edges} is important. If we start with two distinct edges $\{x_1,y_1\}$ and $\{x_1,y_2\}$ that are each regular in \(G\) but which share a vertex, then $\{x_1,y_2\}$ will not be regular in $H=G_{\{x_1,y_1\}}$, since there is a loop in \(H\) at \(x_1\).

\begin{remark}
The proof of Lemma~\ref{two edges} can be easily adapted to show that under the hypotheses, \(\{x_2,y_2\}\) is a regular edge of \(G^{\{x_1,y_1\}}\) as well. However the converse no longer holds, as illustrated by the edges \(\{x_1,y_1\}, \{x_2,y_2\}\) in the graph
\begin{center}
\begin{tikzpicture}[every loop/.style={}]
\tikzstyle{point}=[inner sep=0pt]
\node (a)[point,label=above: $x_1$] at (1,1) {};
\node (b)[point,label=right:$x_2$] at (2,1){};
\node (c)[point,label=right:$y_2$] at (2,0){};
\node (d)[point,label=above right:$a$] at (1,0){};
\node (e)[point,label=below:$y_1$] at (0,1){};

\draw (a.center) -- (b.center);
\draw (b.center) -- (c.center);
\draw (c.center) -- (d.center);
\draw (d.center) -- (a.center);
\draw (a.center) -- (e.center);

\draw (d) edge [anchor=center, loop left] (d);

	\filldraw [black] (a.center) circle (1pt);
	\filldraw [black] (b.center) circle (1pt);
	\filldraw [black] (c.center) circle (1pt);
	\filldraw [black] (d.center) circle (1pt);
	\filldraw [black] (e.center) circle (1pt);
\end{tikzpicture}
\end{center}
Note that \(\{x_2,y_2\}\) is a regular edge of \(G^{\{x_1,y_1\}}\) but both \(N_G(x_2) \cap \{x_1,y_1\}\ne \emptyset\) and \(N_G(y_2) \cap \{x_1, y_1, a\} \ne \emptyset.\)
\end{remark}

\begin{corollary}\label{reg 2}
Let $G$ be a graph without loops and assume $\{x,y\}, \{u,w\}$ are disjoint regular edges of $G$ such that $\{x,y,u,w\}$ are not the vertices of a square. Then $x+y, u+w$ is a regular sequence on $R/I(G)$.
\end{corollary}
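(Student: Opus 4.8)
The plan is to deduce the corollary directly from Lemma~\ref{two edges} by checking that its two bulleted hypotheses hold automatically in the loop-free setting. Setting $\{x_1,y_1\}=\{x,y\}$ and $\{x_2,y_2\}=\{u,w\}$, I note that since $G$ has no loops the set $L=\{v\mid v^2\in E(G)\}$ is empty, so the two conditions of Lemma~\ref{two edges} collapse to the single requirement that $u$ and $w$ do not both have a neighbor in $\{x,y\}$. Once this is verified, the ``moreover'' clause of Lemma~\ref{two edges} yields at once that $x+y,\,u+w$ is a regular sequence on $R/I(G)$, finishing the proof; the only work is in confirming the requirement.

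To do so I would argue by contradiction: suppose both $N_G(u)\cap\{x,y\}\neq\emptyset$ and $N_G(w)\cap\{x,y\}\neq\emptyset$. First I would observe that neither $u$ nor $w$ can be adjacent to both $x$ and $y$, since such a common neighbor would place a triangle through the regular edge $\{x,y\}$, contradicting Lemma~\ref{no-triangle}. Hence each of $u,w$ is adjacent to exactly one of $x,y$, and the argument splits into the case where they attach to the same endpoint of $\{x,y\}$ and the case where they attach to different endpoints.

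If $u$ and $w$ are both adjacent to the same vertex, say $x$, then $x,u,w$ form a triangle through the regular edge $\{u,w\}$, again contradicting Lemma~\ref{no-triangle}. The remaining, and essential, case is when, after relabeling, $u$ is adjacent to $x$ and $w$ is adjacent to $y$. Here the induced subgraph on $\{x,y,u,w\}$ contains the four edges $\{x,y\},\{u,w\},\{x,u\},\{y,w\}$, and I would then rule out the two possible chords $\{x,w\}$ and $\{y,u\}$, since either one would create a triangle through $\{x,y\}$ and so contradict Lemma~\ref{no-triangle}. Consequently the induced subgraph is exactly a $4$-cycle, contradicting the hypothesis that $\{x,y,u,w\}$ are not the vertices of a square.

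This last case is where I expect the real content to lie: it is the only place where the ``not a square'' hypothesis is consumed, and it requires combining the regularity of \emph{both} edges, through Lemma~\ref{no-triangle}, to force the configuration to be a genuine induced $4$-cycle rather than a $4$-cycle carrying a chord. With all cases excluded, the hypotheses of Lemma~\ref{two edges} hold and the conclusion follows immediately.
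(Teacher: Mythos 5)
Your proposal is correct and takes the same route as the paper: the paper's own proof is a one-line reduction to Lemma~\ref{two edges}, noting that $G_{\{x,y\}}$ has a single loop at $x$ and asserting the result follows immediately. Your case analysis via Lemma~\ref{no-triangle} --- showing that regularity of both edges plus the ``not a square'' hypothesis forces at most one of $u,w$ to have a neighbor in $\{x,y\}$, which is exactly the (loop-free) bulleted condition of Lemma~\ref{two edges} --- is precisely the verification the paper leaves implicit, so your write-up is a more complete rendering of the same argument.
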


\begin{proof}
Note that $I(G_{\{x,y\}})$ is the edge ideal of a graph with precisely one loop, $\{x,x\}$. The result follow immediately from Lemma~\ref{two edges}.
\end{proof}

We next establish conditions that will allow us to build longer regular sequences from disjoint regular edges. Although stated in a slightly different format, when $t=2$ the conditions of the theorem below are equivalent to the hypotheses of Lemma~\ref{two edges} since at most one vertex of a regular edge of $G$ has a neighbor in $G$ that is a loop.

\begin{theorem}\label{reg seq}
Let \(G\) be a graph, potentially with loops on vertices \(\{z_1, \ldots, z_r\}\). Assume \(\{x_1,y_1\}, \ldots ,\{x_t,y_t\}\) are disjoint regular edges of \(G\) such that for \(2 \leq i \leq t\) either 
\[ N_G(x_i) \cap \{x_1,y_1,\ldots , x_{i-1},y_{i-1}, z_1, \ldots, z_r\} = \emptyset, \,\, \mbox{\rm{or}}\]
\[N_G(y_i) \cap \{x_1,y_1,\ldots , x_{i-1},y_{i-1},z_1, \ldots, z_r\} = \emptyset.\]
 Then $x_1+y_1, x_2+y_2,\ldots,x_t+y_t$ is a regular sequence on $R/I(G)$.
\end{theorem}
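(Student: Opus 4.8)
The plan is to reduce the statement, via the iterated isomorphism of \cite[Theorem 2.6]{FHM}, to a claim about regular edges in a sequence of contracted graphs, and then to establish that claim using the descent of Property $P$ from Lemma~\ref{P descends} together with the loop criterion of Theorem~\ref{good=P loops}. Write $e_j=\{x_j,y_j\}$, set $H_1=G$ and $H_{j+1}=(H_j)_{e_j}$, so that $H_i=G_{e_1,\ldots,e_{i-1}}$. Exactly as in the final paragraph of the proof of Lemma~\ref{two edges}, iterating \cite[Theorem 2.6]{FHM} produces for each $i$ an isomorphism
\[
R/(I(G),x_1+y_1,\ldots,x_{i-1}+y_{i-1}) \cong R_i/I(H_i),
\]
where $R_i$ is the polynomial ring on $V(H_i)=V(G)\setminus\{y_1,\ldots,y_{i-1}\}$. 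Because the edges are disjoint, $x_i$ and $y_i$ survive as variables and the element $x_i+y_i$ is carried to $x_i+y_i$. By the standard characterization of regular sequences together with Lemma~\ref{regular sums}, it therefore suffices to show that for each $1\le i\le t$ the edge $\{x_i,y_i\}$ is a regular edge of $H_i$.

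The engine of the argument is a single bookkeeping observation about Definition~\ref{Ge}: contracting $e_j$ deletes $y_j$ and creates new edges only of the form $\{a,x_j\}$. Hence any edge of $H_i$ joining two vertices outside $\{x_1,\ldots,x_{i-1}\}$ already joins them in $G$, and the loops of $H_i$ are precisely the surviving original loops at $z_1,\ldots,z_r$ (these vertices persist because regular edges carry no loops, by Lemma~\ref{regular-no-loop}) together with new loops at $x_1,\ldots,x_{i-1}$. Fix $i$ and, using the hypothesis, assume without loss of generality that $N_G(x_i)\cap\{x_1,y_1,\ldots,x_{i-1},y_{i-1},z_1,\ldots,z_r\}=\emptyset$. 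I would then check that $x_i$ never becomes adjacent to any of these vertices under contraction: a new edge $\{x_i,x_k\}$ could arise only at step $k$, from an edge $\{x_i,y_k\}$ of $H_k$, but since neither $x_i$ nor $y_k$ is a merge target before step $k$, the non‑adjacency $x_i\not\sim y_k$ persists from $G$, so no such edge appears; meanwhile the vertices $y_j$ are deleted and the $z_j$ are never merge targets. Consequently $N_{H_i}(x_i)$ contains no looped vertex, which yields the loop condition of Theorem~\ref{good=P loops}, and the same reasoning shows $x_i,y_i$ themselves remain loop-free in $H_i$.

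For Property $P$ I would descend one contraction at a time using Lemma~\ref{P descends}. Since $\{x_i,y_i\}$ is a regular edge of $G$, it has Property $P$ in $H_1=G$ by Theorem~\ref{good=P loops}, and to pass from $H_j$ to $H_{j+1}$ it suffices to verify that the induced subgraph of $H_j$ on $\{x_i,y_i,x_j,y_j\}$ is not a $4$-cycle. By the same bookkeeping, $x_i\not\sim x_j$ and $x_i\not\sim y_j$ in $H_j$, so within that induced subgraph $x_i$ has degree one and the subgraph cannot be a $4$-cycle; thus Lemma~\ref{P descends} applies at every step and $\{x_i,y_i\}$ has Property $P$ in $H_i$. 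Combining this with the loop condition, Theorem~\ref{good=P loops} gives that $\{x_i,y_i\}$ is a regular edge of $H_i$, completing the reduction and hence the proof.

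I expect the main obstacle to be the careful tracking of how neighborhoods and loops evolve under the iterated contraction, in particular justifying that the hypothesis, which is stated purely in terms of the original neighborhoods $N_G(x_i)$ and $N_G(y_i)$, controls adjacencies in each intermediate graph $H_j$. The observation that every new edge and every new loop is incident to a merge target $x_1,\ldots,x_{i-1}$ is what makes this tractable, since it confines all of the newly created adjacencies to exactly the vertices that the hypothesis excludes as neighbors on the chosen side of $\{x_i,y_i\}$.
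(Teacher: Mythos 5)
Your proof is correct, and it is built from the same ingredients as the paper's: the iterated isomorphism from \cite[Theorem 2.6]{FHM} identifying $R/(I(G),x_1+y_1,\ldots,x_{i-1}+y_{i-1})$ with a smaller polynomial ring modulo $I(H_i)$, descent of Property $P$ under contraction (Lemma~\ref{P descends}), and the characterization of regular edges in graphs with loops (Theorem~\ref{good=P loops}). The difference is organizational. The paper inducts on $t$ with Lemma~\ref{two edges} as the engine: at every contraction step it re-certifies that \emph{every} remaining edge $\{x_s,y_s\}$, $s>i$, is still a regular edge of the intermediate graph $G_i$, which forces it to re-verify the neighbor-set criterion of Lemma~\ref{two edges} inside each $G_i$ --- an implicit use of the same bookkeeping you spell out. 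You instead treat each edge $e_i$ on its own, carry only the weaker invariant that $e_i$ has Property $P$ through the intermediate graphs $H_j$, and check the loop condition of Theorem~\ref{good=P loops} just once, in $H_i$; the observation that every edge or loop created by contraction is incident to a merge target is stated explicitly and does all the work. Your version is arguably cleaner on this point, since the paper's assertions relating $N_{G_i}(x_s)$ and $N_{G_i}(y_s)$ to the hypotheses on $N_G(x_s)$ and $N_G(y_s)$ rest on exactly that unstated observation; what the paper's route buys is the stronger intermediate statement (all later edges stay regular at every stage) and the reuse of Lemma~\ref{two edges} as a self-contained, two-sided criterion. One point to make explicit in a final write-up: the ``without loss of generality'' swap of $x_i$ and $y_i$ must be performed edge by edge, and it is harmless precisely because the hypothesis and conclusion are symmetric within each pair, and because your analysis of $e_i$ excludes \emph{both} vertices of every earlier edge from $N_G(x_i)$, so it does not matter which vertex of $e_j$, $j<i$, was chosen as the merge target.
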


\begin{proof}
We proceed by induction on \(t\), with Lemma~\ref{regular sums} and the comment following Lemma~\ref{regular-no-loop} establishing the result for \(t=1\) and Lemma~\ref{two edges} establishing the result for \(t=2\). Assume \(t \geq 3\) and that the result holds for \(t-1\).
Consider 
\[R/(I(G), x_1+y_1, \ldots , x_{t-1}+y_{t-1}) \cong R''/I(H)\]
where $H = G_{\{x_1,y_y\},\{x_2,y_2\}, \ldots ,\{x_{t-1},y_{t-1}\}}$ and $R'' \cong R/(y_1, \ldots, y_{t-1})$. We will show that $\{x_t, y_t\}$ is a regular edge of $H$.

First, by hypothesis, for $2 \leq s \leq t$, we have that $\{x_1,y_1\},\{x_s,y_s\}$ are disjoint regular edges of \(G\) and if \(N_G(x_s) \cap \{x_1,y_1\} \ne \emptyset\), then 
\[N_G(y_s) \cap \{x_1,y_1,\ldots , x_{s-1},y_{s-1},z_1, \ldots, z_r\} = \emptyset,\]
so in particular 
\(N_G(y_s) \cap \{x_1, y_1, z_1, \ldots, z_r\}=\emptyset.\) A similar argument holds when \(N_G(y_s) \cap \{x_1,y_1\} \ne \emptyset\). Thus by Lemma~\ref{two edges}, 
 \(\{x_s,y_s\}\) is regular in \(G_1 = G_{\{x_1,y_1\}}\) for all \(2 \leq s \leq t\). 

Inductively, define \(G_i = G_{\{x_1,y_1\},\{x_2,y_2\}, \ldots ,\{x_{i},y_{i}\}}\) for \(i \leq t-1\). Notice that \(G_i\) is a graph with loops at \(x_1, \ldots , x_{i},z_1,\ldots,z_r\)  and no other loops.
For \(i+2 \le s \le t\),  \(\{x_{i+1},y_{i+1}\}\), \(\{x_s,y_s\}\) are disjoint edges of \(G_i\) that are regular in \(G_i\) by induction on \(i\).
If \(N_{G_i}(x_s) \cap \{x_{i+1},y_{i+1}\} \ne \emptyset\), then 
\[N_{G_{i}}(y_s) \cap \{x_{i+1},y_{i+1},\ldots , x_{s-1},y_{s-1},x_1,\ldots, x_i, z_1, \ldots, z_r\} = \emptyset,\]
so in particular 
\(N_{G_i}(y_s) \cap \{x_{i+1}, y_{i+1},x_1,\ldots,x_i, z_1, \ldots, z_r\}=\emptyset.\) A similar argument holds when \(N_{G_i}(y_s) \cap \{x_{i+1},y_{i+1}\} \ne \emptyset\). Thus by Lemma~\ref{two edges}, 
 \(\{x_s,y_s\}\) is regular in \(G_{i+1}.\) 
 In particular, \(\{x_t,y_t\}\) is regular in \(G_{t-1} = H\) as desired. Hence $x_t+y_t$ is regular on $R''/I(H)$ and the result folllows.
\end{proof}

\begin{remark}
If \(G\) is a graph without loops, the results of Theorem~\ref{reg seq} imply that if \(G\) has a perfect matching with Property \(P\), then these edges form a regular sequence, in which case \(G\) is Cohen-Macaulay, as long as they can be ordered so that each edge has (at least) one vertex not connected to the prior edges. Such will be the case for well-known classes of graphs, such as suspensions of graphs and Cohen-Macaulay bipartite graphs. In addition, by exchanging vertex labels of \(x_i,y_i\) if necessary, one can assume that \(x_i\) is independent from the set \(\{x_1,y_1,\ldots,x_{i-1},y_{i-1}\}\) in this case, or that \[N_G(x_i) \cap \{x_1,y_1,\ldots , x_{i-1},y_{i-1}, z_1, \ldots, z_r\} = \emptyset\] in the more general setting of Theorem~\ref{reg seq}.
\end{remark}

In the special case where $G$ is a Cohen-Macaulay graph and the edges used in Theorem~\ref{reg seq} form a perfect matching, then the precise form of $G$ is known. 

\begin{remark} If $\{x_1,y_1\}, \ldots, \{x_t,y_t\}$ in Theorem~\ref{reg seq} forms a perfect matching, then $G$ is a very well-covered graph, and the results follow from \cite{Crupi2011}. To see this, note that by relabeling if necessary, we can assume $N(x_i) \cap \{x_1,y_1,\ldots , x_{i-1},y_{i-1}\} = \emptyset$ for all $i$. Thus $\{x_1, \ldots, x_t\}$ is an independent set. Indeed, if $\{x_i,x_j\} \in E(G)$ with $i<j$ then $N(x_j) \cap \{x_1,y_1,\ldots , x_{j-1},y_{j-1}\} \ne \emptyset$. Then $\{y_1, \dots ,y_t\}$ is a minimal vertex cover and since the matching is perfect, $t = 1/2|V(G)| = \height(I)$.
\end{remark}

\section{Linear binomial regular elements}

Theorem~\ref{good=P} establishes a means of creating linear binomials that are regular on \(R/I(G)\) for any (simple) graph \(G\) conaining an edge with Property \(P\). Theorem~\ref{good=P loops} does the same for graphs with loops. In this section, we show that the only linear binomials that are regular on \(R/I(G)\) correspond to either edges with Property \(P\) in \(G\) or pairs of non-connected vertices \(x,y\) for which adding an edge \(\{x,y\}\) to \(G\) results in the new edge having Property \(P\) in the augmented (simple) graph. An analgous statement holds for a graph with loops, assuming conditions similar to those in previous sections.

\begin{theorem}\label{binomial regular}
Let \(G\) be a graph with \(x,y \in V(G)\) but \(\{x,y\}\not\in E(G).\)
Then the following are equivalent
\begin{enumerate}
\item In the graph \(\widehat{G}\) with \(V(\widehat{G}) = V(G)\) and \(E(\widehat{G}) = E(G)\cup \{\{x,y\}\},\) the edge \(\{x,y\}\) has Property \(P\).
\item The elements \(x+y\) and \(x-y\) are regular elements of \(R/I(G)\). 
\item The elements \(x+y\) and \(x-y\) are regular elements of \(R/I(\widehat{G})\). 
\item For any associated prime \(Q\) of \(R/I(G)\) the  element \(xy\) is not an element of \(Q^2.\)
\item For any associated prime \(Q\) of \(R/I(\widehat{G})\) the  element \(xy\) is not an element of \(Q^2.\)
\item No minimal vertex cover of \(G\)  contains the set \(\{x,y\}.\)
\item No minimal vertex cover of \(\widehat{G}\)  contains the set \(\{x,y\}.\)
\end{enumerate}
\end{theorem}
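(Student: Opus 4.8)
The plan is to split the seven conditions into two groups according to whether they concern \(G\) or the augmented graph \(\widehat{G}\), and then to link the two groups by a single combinatorial observation about vertex covers. Conditions (1), (3), (5), (7) all refer to \(\widehat{G}\), in which \(\{x,y\}\) \emph{is} an edge, while (2), (4), (6) all refer to \(G\), in which it is not. The observation that makes everything fit together is that for a simple graph the edge ideal is square-free, hence radical, so it has no embedded primes and \(\Ass(R/I(G)) = \Min(R/I(G))\) is exactly the set of minimal vertex covers of \(G\) (and likewise for \(\widehat{G}\)). This collapses the distinction between the ``associated prime'' conditions and the ``minimal vertex cover'' conditions inside each group.

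For the \(\widehat{G}\) group I would argue as follows. Since \(\{x,y\}\in E(\widehat{G})\), Definition~\ref{regular} says that \(\{x,y\}\) is a regular edge of \(\widehat{G}\) precisely when \(xy\notin Q^2\) for every \(Q\in\Ass(R/I(\widehat{G}))\), which is condition (5); Theorem~\ref{good=P} applied to \(\widehat{G}\) identifies this with \(\{x,y\}\) having Property \(P\) in \(\widehat{G}\), i.e.\ condition (1); and Lemma~\ref{regular sums} applied to \(\widehat{G}\) identifies it with \(x\pm y\) being regular on \(R/I(\widehat{G})\), i.e.\ condition (3). Finally, because \(I(\widehat{G})\) is radical, \(\Ass(R/I(\widehat{G})) = \Min(R/I(\widehat{G}))\) consists of the minimal vertex covers of \(\widehat{G}\), so ``some associated prime contains both \(x\) and \(y\)'' becomes ``some minimal vertex cover contains both,'' giving (5) \(\iff\) (7).

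For the \(G\) group, condition (2) fails exactly when \(x+y\) (or \(x-y\)) lies in some associated prime \(Q\) of \(R/I(G)\). Since \(Q\) is a monomial prime generated by variables, it contains the binomial \(x\pm y\) if and only if it contains both \(x\) and \(y\); equivalently \(xy\in Q^2\) since \(x\neq y\). This is the same monomial argument used in Lemma~\ref{regular sums}, and it needs only that \(Q\) be a monomial prime, not that \(\{x,y\}\) be an edge, so it applies here. Hence (2) \(\iff\) (4). Radicality of \(I(G)\) again gives \(\Ass = \Min\), so (4) \(\iff\) (6).

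It remains to bridge the two groups, and I expect the one genuinely combinatorial step to be the equivalence (6) \(\iff\) (7). The crux is that a minimal vertex cover of \(\widehat{G}\) containing both \(x\) and \(y\) is the \emph{same thing} as a minimal vertex cover of \(G\) containing both. Indeed, a set \(C\) with \(x,y\in C\) covers the edges of \(G\) if and only if it covers those of \(\widehat{G}\), since the only extra edge \(\{x,y\}\) is automatically covered once \(x\in C\); moreover deleting any single vertex from such a \(C\) leaves \(\{x,y\}\) still covered (as \(x\neq y\)), so a one-vertex deletion fails to cover \(\widehat{G}\) exactly when it fails to cover \(G\). This is where the hypothesis \(\{x,y\}\notin E(G)\) is used: it guarantees \(G\) and \(\widehat{G}\) differ in precisely that one edge. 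Checking that minimality transfers in both directions is the only place requiring real care; everything else is a direct application of Theorem~\ref{good=P}, Lemma~\ref{regular sums}, Definition~\ref{regular}, and the radicality of square-free monomial ideals. Combining the two equivalent blocks \(\{(1),(3),(5),(7)\}\) and \(\{(2),(4),(6)\}\) through (6) \(\iff\) (7) then yields the equivalence of all seven statements.
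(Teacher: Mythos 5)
Your proposal is correct, and its architecture is the same as the paper's: the same split into the \(\widehat{G}\)-block \((1),(3),(5),(7)\) handled by the definition of regular edge, Theorem~\ref{good=P} and Lemma~\ref{regular sums}, the same \(G\)-block \((2),(4),(6)\) handled by the monomial-prime argument of Lemma~\ref{regular sums} (you rightly note that this argument never uses that \(\{x,y\}\) is an edge), and the same bridge \((6)\Leftrightarrow(7)\). The only genuine divergence is how the bridge is proved. The paper proves the two implications separately, each by contradiction: from a minimal cover of one graph containing \(\{x,y\}\) it extracts a minimal cover of the other graph inside it, and then either augments it (by adding back \(y\)) or compares it directly to contradict minimality. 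You instead prove the sharper statement that a set containing both \(x\) and \(y\) is a minimal vertex cover of \(G\) if and only if it is a minimal vertex cover of \(\widehat{G}\), using that the extra edge \(\{x,y\}\) stays covered after deleting any single vertex. Your version is symmetric and yields both directions at once; its one extra ingredient is the (standard, but worth stating explicitly) fact that a vertex cover is minimal precisely when every one-vertex deletion fails to be a cover, which holds because vertex covers are closed under taking supersets. With that fact made explicit, your argument is complete and, if anything, slightly cleaner than the paper's double contradiction.
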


\begin{proof}
As in  Lemma~\ref{regular sums}, \(x+y\) and \(x-y\) are regular on \(R/I(G)\) if and only if no associated prime of \(I(G)\), or equivalently, no minimal vertex cover of \(G\), contains both \(x\) and \(y\). This shows the equivalence of \((2),(4),\) and \((6)\). By Theorem~\ref{good=P}, Lemma~\ref{regular sums}, and Definition~\ref{regular}, \((1),(3),(5)\) and \((7)\) are equivalent. Thus it suffices to show \((6)\) and \((7)\) are equivalent.

Suppose that no minimal vertex cover of \(G\)  contains the set \(\{x,y\}.\)   Let \(X\) be a minimal vertex cover of \(\widehat{G}\) that contains the set \(\{x,y\}\).
Then \(X\) is a vertex cover of \(G\) and so contains a minimal vertex cover \(Y\subset X\) of \(G.\)  By the assumption that no minimal vertex cover of \(G\) contains the set \(\{x,y\},\)
at most one of the vertices \(x,y\) is in the set \(Y\). Without loss of generality, assume \(x \not\in Y\). Then \(Y\cup\{y\}\) is a proper subset of \(X\) that is a cover for \(\widehat{G}\) contradicting the minimality of \(X.\) Thus \((6) \Rightarrow (7).\)

Suppose that in the graph \(\widehat{G}\) , no minimal vertex cover of \(\widehat{G}\) contains the set  \(\{x,y\}.\)
Let \(Z\) be a minimal vertex cover of \(G\) that contains \(\{x,y\}.\)  Then \(Z\) is a vertex cover of \(\widehat{G}\) that contains a minimal vertex cover \(Y\) of \(\widehat{G}.\)
This is a contradiction, as \(Y\) is a vertex cover of \(\widehat{G}\), and so is a vertex cover of \(G\), but \(Y\) is a proper subcover of \(Z,\) contradicting the minimality of \(Z.\) Hence no minimal vertex cover of \(G\) contains the set \(\{x,y\}.\) Thus \((7) \Rightarrow (6).\)

\end{proof}

Note that Theorem~\ref{binomial regular} recovers the regularity of the sum of vertices forming a "leaf pair" as in \cite[Theorem 4.11]{FHM}. Since it classifies all binomial regular elements, it can be combined with \cite[Proposition 4.1]{FHM} to provide lower bounds on depths of edge ideals of graphs.

\section{Application to Hilbert Series of Edge Ideals of Graphs}

The regular sequences found in Section~\ref{Regular Sequences} consist of homogeneous forms of degree one. This allows one to use these regular sequences to simplify the computation of the Hilbert Series of $R/I$. In this section, we show how such an approach leads to a simplification of the standard correspondence between an $h$-vector of an ideal and the $f$-vector of an associated simplicial complex. Moreover, in the Cohen-Macaulay case, when there is a perfect matching formed by a regular sequence of edges satisfying Property $P$ with the necessary condition on the neighbor sets in Theorem~\ref{reg seq}, then we provide an interpretation of the Hilbert coefficients appearing in the $h$-vector.

First recall that when $I$ is a square-free monomial ideal, the {\em Hilbert Series} of $R/I$ can be written as
$$ HS_{R/I} = \sum_{s=0}^{\infty} \dim_k(R/I)_s t^s = \frac{h_0+h_1t+\cdots +h_dt^d}{(1-t)^d}$$
where $d = \dim(R/I).$ For ease of reference, we collect the coefficients of the numerator of this series in the {\em $h$-vector} of $R/I$, which is $(h_0,h_1,\ldots, h_d).$

Given an edge ideal $I(G)$, consider the simplicial complex $I_{\Delta}$ formed by the clique complex of the complement graph of $G$. That is, $\sigma =  \{x_{i_1}
, \ldots,x_{i_s}\}$ is a face of $I_{\Delta}$ if and only if $\sigma$ is an independent set. The {\em $f$-vector of $I_{\Delta}$} is the vector $(f_{-1}, f_0, f_1, \ldots, f_{d-1})$ where $f_{-1}=1$ represents the empty subset, $f_0=n$ is the number of vertices in $G$, and in general $f_i$ is the number of subsets of $V(G)$ containing $i+1$ independent vertices, which is the number of faces of dimension $i$ of $I_{\Delta}$. 
There is a well-known relationship between the $h$-vector of $I$ and the $f$-vector of the simplicial complex whose Stanley-Reisner ideal is $I$. While this relationship is well-known and holds for any square-free monomial ideal, we state it here for graphs for ease of reference.

\begin{theorem}\cite{Stanley}
Assume $I$ is an edge ideal of a graph $G$ and $(h_0, \ldots, h_d)$ is the $h$-vector of $I$. If $(f_{-1},f_0, \ldots, f_{d-1})$ is the $f$-vector of $I_{\Delta}$. Then

$$h_k = \sum_{i=0}^k (-1)^{k-i} \binom{d-i}{k-i}f_{i-1}$$
\end{theorem}

To better understand $h_i$, we define a simplicial complex that is smaller than $I_{\Delta}$ and relate $h_i$ to the $f$-vector of this smaller complex. 

\begin{lemma}\label{HS equal}
If $G$ is a graph, potentially with loops, and $e=\{x,y\}$ is a regular edge of $G$, then $G$ and $G^e$ have identical Hilbert Series. Moreover, the $h$-vectors of $I(G), I(G_e),$ and $I(G^e)$ are all equal.
\end{lemma}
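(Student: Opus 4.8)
The plan is to prove the statement in two parts: first that the Hilbert series of $R/I(G)$ and $R/I(G^e)$ agree, and second that the three $h$-vectors coincide. For the Hilbert series, the key is that $x+y$ is a regular element on $R/I(G)$ since $e=\{x,y\}$ is a regular edge (Lemma~\ref{regular sums}). Quotienting by a regular homogeneous form of degree one multiplies the Hilbert series by $(1-t)$; that is, $HS_{R/(I(G),x+y)} = (1-t)\,HS_{R/I(G)}$. By the discussion preceding the lemma (the $J=I(G_e)$ identification and \cite[Theorem 2.6]{FHM}), we have $R/(I(G),x+y) \cong R'/I(G_e)$ where $R'$ has one fewer variable, so $HS_{R/(I(G),x+y)} = HS_{R'/I(G_e)}$.

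Next I would relate $I(G^e)$ back into this picture. By Definition~\ref{polarizedGe} and the observation that $I(G^e)$ is obtained from $I(G_e)$ by polarizing the single loop at $x$ created by $e$, polarization preserves the Hilbert series up to the standard $(1-t)$ factor per new variable introduced: $HS_{R/I(G^e)} = (1-t)^{-1} HS_{R'/I(G_e)}$ if $G^e$ has exactly one more variable than $R'$, which it does since $V(G^e)=V(G)$ while $V(G_e)=V(G)\setminus\{y\}$. Combining, $HS_{R/I(G^e)} = (1-t)^{-1}HS_{R'/I(G_e)} = (1-t)^{-1}(1-t)HS_{R/I(G)} = HS_{R/I(G)}$, establishing that $G$ and $G^e$ have identical Hilbert series. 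I would be slightly careful here about whether polarization is being applied in the same ambient ring or in an extended one, and track the variable count precisely so the powers of $(1-t)$ balance.

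For the $h$-vectors, the cleanest route is to recall that the $h$-vector is determined by the numerator polynomial of the Hilbert series once written over $(1-t)^d$, where $d=\dim$. Since $x+y$ is regular, $\dim(R/(I(G),x+y)) = \dim(R/I(G)) - 1$, and both the numerator polynomial and the codimension-adjusted denominator shift compatibly, so the $h$-polynomial (the numerator) is unchanged under quotient by a linear regular element. Thus $I(G)$ and $I(G_e)$ share the same $h$-vector. Polarization is a standard operation known to preserve the $h$-vector as well (it adds a variable but also raises $\dim$ by one, leaving the numerator fixed), so $I(G_e)$ and $I(G^e)$ have equal $h$-vectors. Chaining these gives equality of all three.

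The main obstacle I anticipate is the bookkeeping of dimensions and variable counts through the three operations — quotient by $x+y$, the isomorphism to $R'/I(G_e)$, and the polarization to $I(G^e)$ — to confirm that the powers of $(1-t)$ cancel exactly and that each step shifts $\dim$ by the amount that keeps the numerator $h$-polynomial invariant. In particular I would want to verify that $\dim(R/I(G^e)) = \dim(R/I(G))$ (polarization raises dimension by one, compensating for the loop, while $G^e$ has the same vertex set as $G$), so that writing both Hilbert series over the same power of $(1-t)$ forces the numerators, hence the $h$-vectors, to be literally equal rather than merely proportional. Once the dimensions are pinned down, the equalities follow formally from the multiplicativity of the $(1-t)$ factor for linear regular elements and the standard invariance of Hilbert series and $h$-vectors under polarization.
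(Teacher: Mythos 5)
Your proposal is correct, and structurally it is the same proof as the paper's: both arguments funnel everything through $R'/I(G_e)$, using that a homogeneous linear regular element scales the Hilbert series by $(1-t)$ while dropping the dimension by one, so the numerator, i.e.\ the $h$-vector, is untouched. The one place you genuinely diverge is the link between $I(G_e)$ and $I(G^e)$: you cite the general theorem that polarization multiplies the Hilbert series by $(1-t)^{-1}$ per new variable and preserves the $h$-vector, whereas the paper re-runs the regular-element argument on the other side. Concretely, in $G^e$ the vertex $y$ is a leaf attached to $x$, so $\{x,y\}$ is again a regular edge of $G^e$ (this can be checked via Theorem~\ref{good=P loops}), and modulo $x+y$ the generator $xy$ of $I(G^e)$ becomes $-x^2$, giving $(I(G^e),x+y)=(I(G_e),x+y)$ and hence $R/(I(G^e),x+y)\cong R'/I(G_e)$; combining the two $(1-t)$-identities yields both claims at once. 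Two remarks on your version. First, when $G$ has loops, $G_e$ retains them, so $I(G^e)$ is only a \emph{partial} (one-step) polarization of $I(G_e)$ --- only the loop at $x$ coming from $e$ is polarized --- so the blanket polarization theorem does not literally apply; you need its one-step form, which is legitimate (it is how the full theorem is proved by induction) and amounts to saying that $x-y$ is regular on $R/I(G^e)$ with quotient $R'/I(G_e)$, i.e., exactly the computation the paper makes explicit with $x+y$. Second, your dimension bookkeeping is right: $\dim R/I(G^e)=\dim R'/I(G_e)+1=\dim R/I(G)$, so equal Hilbert series written over the same power of $(1-t)$ force equal numerators. In substance the two arguments are equivalent; yours is shorter if one accepts the polarization fact as a black box, while the paper's stays self-contained and produces the second regular element from the machinery it has already developed.
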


\begin{proof}
Since $x+y$ is a homogeneous regular element on $R/I$ of degree $1$, then 
$$HS_{R/(I,x+y)} = (1-t) HS_{R/I}.$$
Recall that 
$$R/(I,x+y) = R/(I(G_e),x+y) \cong R'/I(G_e)$$
where $R'$ is a ring in one fewer variable. Thus
$$HS_{R'/(I(G_e))} = (1-t) HS_{R/I}$$
and so the $h$-vectors of $I=I(G)$ and of $I(G_e)$ are the same.

Similarly, since $G^e$ is the polarization of $G_e$, we have 
$$R/(I(G^e),x+y) = R/(I(G_e),x+y) \cong R'/I(G_e)$$
and so 
$$HS_{R'/(I(G_e))} = (1-t) HS_{R/I(G^e)}.$$
Hence $I(G_e)$ has the same $h$-vector as $I(G^e)$. Combining the above equalities yields $HS_{R/I} = HS_{R/I(G^e)}$ as well.
\end{proof}

This process can be iterated using a longer regular sequence.

\begin{proposition}\label{HS}
Let \(G\) be a graph, potentially with loops at \(z_1, \ldots, z_r,\) and assume $\{x_1,y_1\}, \ldots ,\{x_t,y_t\}$ are disjoint regular edges of \(G\) such that for \(2 \leq i \leq t\) either 
\[N(x_i) \cap \{x_1,y_1,\ldots , x_{i-1},y_{i-1},z_1, \ldots, z_r\} = \emptyset, \,\, \mbox{\rm{or}}\]
\[N(y_i) \cap \{x_1,y_1,\ldots , x_{i-1},y_{i-1},z_1, \ldots, z_r\} = \emptyset.\] 
Then \(I(G_{e_1, \ldots, e_t})\) and \(I(G^{e_1, \ldots ,e_t})\) both have the same \(h\)-vector as \(I=I(G)\) and the Hilbert series of \(R/I\) is equal to the Hilbert Series of \(R/(I(G^{e_1, \ldots, e_t}))\).
\end{proposition}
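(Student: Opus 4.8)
The plan is to prove this by induction on $t$, using Lemma~\ref{HS equal} as the single-step engine and Theorem~\ref{reg seq} to guarantee that at each stage we are quotienting by a genuine regular element. The key observation is that the hypotheses here are exactly the hypotheses of Theorem~\ref{reg seq}, so $x_1+y_1, \ldots, x_t+y_t$ is a regular sequence on $R/I(G)$, and moreover the intermediate graphs $G_{e_1, \ldots, e_i}$ inherit a regular edge at each step.

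First I would handle the base case $t=1$, which is precisely the content of Lemma~\ref{HS equal}: the edge $e_1 = \{x_1,y_1\}$ is a regular edge of $G$, so $I(G)$, $I(G_{e_1})$, and $I(G^{e_1})$ all share the same $h$-vector, and $HS_{R/I(G)} = HS_{R/I(G^{e_1})}$. For the inductive step, assume the result holds for $t-1$. The crucial point extracted from the proof of Theorem~\ref{reg seq} is that the edge $e_t = \{x_t,y_t\}$ remains a regular edge of the contracted graph $G_{e_1, \ldots, e_{t-1}}$; indeed, Theorem~\ref{reg seq} establishes exactly this as the inductive mechanism by which the regular sequence is built (it shows $\{x_t,y_t\}$ is regular in $H = G_{e_1,\ldots,e_{t-1}}$). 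So I would invoke that fact, then apply Lemma~\ref{HS equal} to the regular edge $e_t$ in the graph $G_{e_1, \ldots, e_{t-1}}$, concluding that $I(G_{e_1,\ldots,e_{t-1}})$, $I((G_{e_1,\ldots,e_{t-1}})_{e_t}) = I(G_{e_1,\ldots,e_t})$, and $I((G_{e_1,\ldots,e_{t-1}})^{e_t})$ all have the same $h$-vector.

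The one piece of bookkeeping that needs care is matching up the polarized and unpolarized towers. By induction, $I(G_{e_1,\ldots,e_{t-1}})$ has the same $h$-vector as $I(G)$, so by the previous paragraph $I(G_{e_1,\ldots,e_t})$ does as well. For the statement about $I(G^{e_1,\ldots,e_t})$, I would note that the operations $(-)_e$ and $(-)^e$ on graphs differ only by polarizing the loop created at the contracted vertex, so passing to $G^{e_1,\ldots,e_t}$ versus $G_{e_1,\ldots,e_t}$ changes only whether these loops are polarized. Since polarization preserves both the Hilbert series (up to the factors of $(1-t)$ accounted for in Lemma~\ref{HS equal}) and the $h$-vector, and since Lemma~\ref{HS equal} already records that $I(G)$ and $I(G^e)$ have equal Hilbert series, iterating gives $HS_{R/I(G)} = HS_{R/I(G^{e_1,\ldots,e_t})}$ directly.

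The main obstacle I anticipate is verifying that $e_t$ is in fact a regular edge of the fully contracted graph $G_{e_1,\ldots,e_{t-1}}$ so that Lemma~\ref{HS equal} applies at the top of the tower. This is not automatic from the inductive hypothesis on Hilbert series alone; it requires the neighbor-set conditions in the theorem statement, which are engineered precisely to let Lemma~\ref{two edges} propagate regularity of $e_t$ through each contraction. Fortunately this verification is already carried out inside the proof of Theorem~\ref{reg seq}, so I would cite that result rather than reprove it, and the remainder reduces to the straightforward application of Lemma~\ref{HS equal} at each step of the induction.
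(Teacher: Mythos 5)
Your proposal is correct and follows essentially the same route as the paper: invoke Theorem~\ref{reg seq} to obtain the regular sequence $x_1+y_1,\ldots,x_t+y_t$, then apply Lemma~\ref{HS equal} repeatedly down the tower of contractions. The paper's proof is just a two-sentence version of this; your expansion (extracting that $e_i$ remains a regular edge of $G_{e_1,\ldots,e_{i-1}}$, and the polarization bookkeeping) simply makes explicit what ``repeated use of Lemma~\ref{HS equal}'' entails.
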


\begin{proof}
By Theorem~\ref{reg seq}, $x_1+y_1, x_2+y_2,\ldots,x_t+y_t$ is a regular sequence on $R/I(G)$. By repeated use of Lemma~\ref{HS equal}, the result follows.
\end{proof}

The corollary above allows one to essentially replace edges in a graph that have the Property $P$ with whiskers, which are edges with one vertex a leaf, as long as the condition on the neighbor sets is satisfied. This allows the apply the results of \cite{Brennan-Morey} on whiskered graphs to a more general class of graphs.

\begin{corollary}\label{CM}
If $I=I(G)$ is Cohen-Macaulay and $e_1=\{x_1,y_1\}, \ldots ,e_d=\{x_d,y_d\}$ is a perfect matching of distinct regular edges such that for $2 \leq i \leq t$ either $N(x_i) \cap \{x_1,y_1,\ldots , x_{i-1},y_{i-1}\} = \emptyset$ or $N(y_i) \cap \{x_1,y_1,\ldots , x_{i-1},y_{i-1}\} = \emptyset$, then the $h$-vector of $I$ is the $f$-vector of the simplicial complex corresponding to the induced graph on $\{x_1, \ldots, x_d\}$ in the whiskered graph $G^{e_1, \ldots,e_d}$.
\end{corollary}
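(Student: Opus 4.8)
The plan is to combine Proposition~\ref{HS}, which guarantees that $I(G)$, $I(G_{e_1,\ldots,e_d})$, and $I(G^{e_1,\ldots,e_d})$ all share the same $h$-vector, with an explicit computation of the Artinian reduction obtained by killing the degree-one regular sequence $x_1+y_1,\ldots,x_d+y_d$. First I would record the dimension count. Since the $e_i$ form a perfect matching on the $2d$ vertices of $G$, the set $\{y_1,\ldots,y_d\}$ is a minimal vertex cover of size $d$ (as in the remark following Theorem~\ref{reg seq}), so $\height(I)=d$ and $\dim(R/I)=2d-d=d$; hence the $h$-vector is $(h_0,\ldots,h_d)$. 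By Theorem~\ref{reg seq}, $x_1+y_1,\ldots,x_d+y_d$ is a regular sequence of length $d=\dim(R/I)$, so the quotient $A:=R/(I,x_1+y_1,\ldots,x_d+y_d)$ has dimension zero and is therefore Artinian, consistent with the Cohen--Macaulay hypothesis.

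Next I would exploit that each $x_i+y_i$ is homogeneous of degree one, so that passing to the quotient multiplies the denominator of the Hilbert series by $(1-t)$. After all $d$ steps, the Hilbert series of $A$ equals the numerator polynomial $h_0+h_1t+\cdots+h_dt^d$; equivalently $\dim_k A_s = h_s$ for every $s$. The problem thus reduces to computing the Hilbert function of $A$ degree by degree.

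The central step is to identify $A$ concretely. Following the isomorphisms of Lemma~\ref{two edges} and Proposition~\ref{HS}, $A \cong k[x_1,\ldots,x_d]/J$ with $J=I(G_{e_1,\ldots,e_d})$. By Definition~\ref{Ge}, each contraction produces a loop at the surviving vertex, so $J$ contains the squares $x_1^2,\ldots,x_d^2$ together with the monomials $x_ix_j$ coming from the non-loop edges of $G_{e_1,\ldots,e_d}$. These non-loop edges are exactly the edges of the induced subgraph $G'$ on $\{x_1,\ldots,x_d\}$ in the whiskered graph $G^{e_1,\ldots,e_d}$: by Definition~\ref{polarizedGe}, polarizing each loop replaces $x_i^2$ by a whisker $\{x_i,y_i\}$ while leaving the edges among the $x_i$ untouched, and the whiskers $y_i$ are deleted when restricting to $\{x_1,\ldots,x_d\}$. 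Since $J$ contains every square $x_i^2$, a $k$-basis for $A$ is given by the squarefree monomials $\prod_{i\in S}x_i$ with $\prod_{i\in S}x_i\notin I(G')$, that is, with $S$ an independent set of $G'$.

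Finally I would count by degree: $\dim_k A_s$ is the number of independent sets of size $s$ in $G'$, which is precisely $f_{s-1}$, the number of $(s-1)$-dimensional faces of the independence complex of $G'$ (with $f_{-1}=1$ for $S=\emptyset$). Combining this with $\dim_k A_s = h_s$ yields $h_s=f_{s-1}$ for all $s$, so the $h$-vector of $I$ is the $f$-vector of the simplicial complex associated to the induced graph on $\{x_1,\ldots,x_d\}$ in $G^{e_1,\ldots,e_d}$, as claimed. I expect the main obstacle to be the bookkeeping in the central step: verifying that the iterated contractions produce exactly the relations $x_i^2$, and that the non-loop edges of $G_{e_1,\ldots,e_d}$ coincide with the edges of the induced subgraph on $\{x_1,\ldots,x_d\}$ in $G^{e_1,\ldots,e_d}$. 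Once the squarefree monomial basis of $A$ is in hand, the identification of its Hilbert function with the $f$-vector is immediate.
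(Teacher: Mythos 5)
Your proposal is correct, but it takes a genuinely more self-contained route than the paper. The paper's own proof is two lines: Proposition~\ref{HS} shows that $I(G)$ and $I(G^{e_1,\ldots,e_d})$ have the same $h$-vector, and the statement for the whiskered graph $G^{e_1,\ldots,e_d}$ is then delegated entirely to the citation \cite[Corollary 3.6]{Brennan-Morey}. You instead re-derive that whiskered-graph fact from scratch: after cutting down by the regular sequence $x_1+y_1,\ldots,x_d+y_d$ supplied by Theorem~\ref{reg seq} (which, since it has length $d=\dim R/I$, makes the quotient Artinian and incidentally shows the Cohen--Macaulay hypothesis is automatic), you identify the Artinian reduction with $k[x_1,\ldots,x_d]/I(G_{e_1,\ldots,e_d})$ via the isomorphisms from Lemma~\ref{two edges}, note that this ideal contains every square $x_i^2$ together with exactly the edges of the induced graph $G'$ on $\{x_1,\ldots,x_d\}$ in $G^{e_1,\ldots,e_d}$, and read off $h_s=\dim_k A_s=f_{s-1}$ from the squarefree monomial basis indexed by independent sets of $G'$. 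The cost is the bookkeeping you flag -- verifying that iterated contraction places a loop at each $x_i$ and that the non-loop edges of $G_{e_1,\ldots,e_d}$ coincide with the induced graph on the $x_i$'s in $G^{e_1,\ldots,e_d}$ -- both of which follow directly from Definitions~\ref{Ge} and~\ref{polarizedGe}, so there is no gap. What your version buys is independence from the external reference and a transparent combinatorial interpretation of the $h_i$; in effect your computation also proves Theorem~\ref{main} along the way. What the paper's version buys is brevity and consistency with its overall program of reducing to whiskered graphs and quoting the machinery already established in \cite{Brennan-Morey}. One small refinement: for the height count it is cleaner to note that any vertex cover must meet each of the $d$ disjoint matching edges, giving $\height(I)\geq d$, with $\{y_1,\ldots,y_d\}$ a cover of size $d$; this avoids any appeal to the relabeling in the remark following Theorem~\ref{reg seq}, whereas your main argument correctly uses the either/or hypothesis exactly as stated.
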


\begin{proof}
This follows immediately from Proposition~\ref{HS} and \cite[Corollary 3.6]{Brennan-Morey}.
\end{proof}

The power of Corollary~\ref{CM} is that it directly interprets the entries of the $h$-vector as the entries of an $f$-vector of a smaller simplicial complex than $I_{\Delta}$. Moreover, there is a direct interpretation of the entries of this $f$-vector in terms of the edges of the perfect matching.This leads to a main result of the paper. As mentioned above, after potentially relabeling, one can assume the condition on the neighbor sets holds for \(x_i\).

\begin{theorem}\label{main}
Let $G$ be a graph without loops and let \(M=\{\{x_1,y_1\}, \ldots,\{x_t,y_t\}\}\) be a perfect matching of \(G\) with Property \(P\) such that for \(2 \leq i \leq t, \,\, N(x_i) \cap \{x_1,y_1,\ldots , x_{i-1},y_{i-1}\} = \emptyset.\) Then \(h_i\) is the number of independent subsets of \(M\) of size $i$.
\end{theorem}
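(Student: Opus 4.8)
The plan is to combine Corollary~\ref{CM} with a direct combinatorial identification of the faces of the relevant simplicial complex. First I would invoke Corollary~\ref{CM}: since $G$ has a perfect matching $M$ of regular edges satisfying the neighbor-set condition, and since a very well-covered graph with a perfect matching with Property $P$ is Cohen-Macaulay (as noted in the remark following Theorem~\ref{reg seq}), the hypotheses of Corollary~\ref{CM} are met. Here $t = d = \dim(R/I)$ because the matching is perfect and gives a minimal vertex cover of size $t$, so $\height(I) = t$ and $\dim(R/I) = n - t = t$. Corollary~\ref{CM} then tells me that $h_i$ equals $f_{i-1}$ of the simplicial complex associated to the induced graph on $\{x_1, \ldots, x_t\}$ inside the whiskered graph $G^{e_1, \ldots, e_t}$; that is, $h_i$ counts the independent sets of size $i$ among the vertices $\{x_1, \ldots, x_t\}$ in $G^{e_1,\ldots,e_t}$.

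The remaining task is therefore to translate "independent subset of $\{x_1, \ldots, x_t\}$ of size $i$ in $G^{e_1,\ldots,e_t}$" into "independent subset of $M$ of size $i$." The natural bijection sends a subset $S \subseteq \{x_1, \ldots, x_t\}$ to the set of edges $\{e_j : x_j \in S\} \subseteq M$. What I must verify is that this bijection carries independent vertex sets to independent edge sets, where a subset of $M$ is declared \emph{independent} exactly when it forms an independent set in the natural sense compatible with the construction. The key step is to show that for $i \neq j$, the vertices $x_i$ and $x_j$ are adjacent in $G^{e_1,\ldots,e_t}$ if and only if the corresponding pair of matching edges $e_i, e_j$ fails to be independent. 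I would establish this by unwinding Definition~\ref{polarizedGe}: the edges of $G^{e_1,\ldots,e_t}$ among the surviving vertices $\{x_1, \ldots, x_t\}$ are precisely those created by contracting each $y_j$ onto $x_j$, so $\{x_i, x_j\} \in E(G^{e_1,\ldots,e_t})$ records the existence of some edge of $G$ joining a vertex of $e_i$ to a vertex of $e_j$.

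The main obstacle I anticipate is pinning down the correct notion of independence for subsets of a matching and confirming it matches the adjacency structure on the $x_i$. A set of matching edges should count as "independent" precisely when no edge of $G$ connects them; under Property $P$ the neighbor structure is rigid (Lemma~\ref{no-3-path} forces $4$-cycles), so I expect that $\{x_i, x_j\} \in E(G^{e_1,\ldots,e_t})$ holds iff at least one of the four possible edges between $e_i$ and $e_j$ lies in $E(G)$, and this is exactly the failure of $\{e_i, e_j\}$ to be an independent pair of edges. Once this pairwise correspondence is verified, the general statement follows because independence of a vertex set (respectively edge set) is determined pairwise: a subset is independent iff every pair within it is. Hence independent subsets of $\{x_1,\ldots,x_t\}$ of size $i$ correspond bijectively to independent subsets of $M$ of size $i$, and $h_i$ counts the latter, completing the proof. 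The one subtlety requiring care is ensuring the whiskering operation (the polarization step in passing from $G_e$ to $G^e$) does not introduce spurious adjacencies among the $x_j$ beyond those inherited from $G$, which I would check directly from clause (i) of Definition~\ref{polarizedGe}.
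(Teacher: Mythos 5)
Your proposal follows the paper's proof essentially step for step: both invoke Corollary~\ref{CM} to identify $h_i$ with $f_{i-1}$ of the complex on $\{x_1,\ldots,x_t\}$ in the whiskered graph $G^{e_1,\ldots,e_t}$, and then unwind Definition~\ref{polarizedGe} to see that $\{x_i,x_j\}$ is an edge there exactly when some edge of $G$ joins $e_i$ to $e_j$, so independent vertex subsets correspond to induced matchings of $G$ (the paper's ``independent subsets of $M$''). One caution on wording: Cohen-Macaulayness does \emph{not} follow from very-well-coveredness plus a Property~$P$ matching alone (the $4$-cycle is a counterexample); it comes from the regular sequence of length $t=\dim(R/I)$ produced by Theorem~\ref{reg seq}, which is precisely where the neighbor-set hypothesis enters --- though the paper's own proof glosses this point in the same abbreviated way.
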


\begin{proof}
Set $e_i = \{x_i,y_i\}$. Since $M$ is a perfect matching with Property \(P\), \(G\) is Cohen-Macualay, so by Corollary~\ref{CM}, the \(h_i = f_{i-1}\) where \(f=(f_{-1}, \ldots ,f_{d-1})\) is the \(f\)-vector of the simplicial complex corresponding to induced subgraph of $G^{e_1, \ldots, e_d}$ on the vertices $x_1, \ldots, x_d.$ Then $f_{i-1}$ is the number of  subsets of size $i$ of $\{x_1, \ldots, x_d\}$ that are independent in $G^{e_1, \ldots, e_d}$. By the definition of $G^{e_1, \ldots,e_d}$, a set $\{x_{j_1}, \ldots, x_{j_i}\}$ is independent in $G^{e_1, \ldots,e_d}$ if and only if $e_{j_1}, \ldots, e_{j_i}$ is an induced matching of $G$.
\end{proof}

 This leads to an interesting result on the degree of the numerator of the Hilbert series in this setting in terms of the induced matching number of $G$, which we will denote $ind(G)$.

\begin{corollary}
Assume $I=I(G)$ is Cohen-Macaulay and $e_1=\{x_1,y_1\}, \ldots ,e_d=\{x_d,y_d\}$ is a perfect matching of distinct regular edges such that for $2 \leq i \leq t$ either $N(x_i) \cap \{x_1,y_1,\ldots , x_{i-1},y_{i-1}\} = \emptyset$ or $N(y_i) \cap \{x_1,y_1,\ldots , x_{i-1},y_{i-1}\} = \emptyset$. If $s = \max\{i \mid h_i \ne 0\}$, then $s \leq ind(G)$.
\end{corollary}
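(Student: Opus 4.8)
The plan is to obtain this corollary as a direct consequence of Theorem~\ref{main}, with a relabeling step to match the hypotheses. First I would record that since each $e_i$ is a regular edge, Theorem~\ref{good=P} gives that each $e_i$ satisfies Property $P$, so $M=\{e_1,\ldots,e_d\}$ is a perfect matching with Property $P$. The present hypothesis is the symmetric ``either/or'' condition on the neighbor sets, whereas Theorem~\ref{main} asks for the condition specifically on the $x_i$. To bridge this gap I would relabel: for each index $i$ at which it is $N(y_i)$ rather than $N(x_i)$ that avoids the prior vertices, swap the names $x_i \leftrightarrow y_i$. The point making this legitimate is that for every $k$ the set $\{x_1,y_1,\ldots,x_{k-1},y_{k-1}\}$ is unchanged when the two names inside a single pair are swapped, so the swaps across different indices are independent and can all be performed at once without disturbing the condition at any other index. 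After relabeling, $N(x_i)\cap\{x_1,y_1,\ldots,x_{i-1},y_{i-1}\}=\emptyset$ holds for all $2\le i\le d$, which is precisely the hypothesis of Theorem~\ref{main}. Since both the $h$-vector of $I(G)$ and the induced matching number $ind(G)$ are intrinsic to $G$, this relabeling affects neither.

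With the hypotheses of Theorem~\ref{main} in force, I would invoke it to conclude that $h_i$ equals the number of independent subsets of $M$ of size $i$; by the characterization established inside the proof of Theorem~\ref{main}, these are exactly the subsets $M'\subseteq M$ with $|M'|=i$ that form an induced matching of $G$. Now by definition $s=\max\{i\mid h_i\neq 0\}$, so $h_s\neq 0$ forces the existence of at least one subset $M'\subseteq M$ of size $s$ that is an induced matching of $G$. Such an $M'$ is in particular an induced matching of $G$, so its size cannot exceed the induced matching number; that is, $s=|M'|\le ind(G)$, which is the desired bound.

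I expect no genuine obstacle here, since the substantive content has already been carried out in Theorem~\ref{main}, which converts the top of the $h$-vector into the combinatorics of induced matchings drawn from $M$. The only step requiring attention is the relabeling, and there the sole subtlety is verifying that swapping $x_i\leftrightarrow y_i$ leaves each prior-vertex set $\{x_1,y_1,\ldots,x_{k-1},y_{k-1}\}$ unchanged, so that the individual swaps do not interfere with one another; once that is noted, the inequality follows immediately.
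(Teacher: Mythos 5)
Your proposal is correct and follows essentially the same route as the paper: both deduce the bound directly from Theorem~\ref{main}, identifying $h_s \neq 0$ with the existence of an induced matching of size $s$ drawn from the perfect matching. The relabeling step you spell out is exactly what the paper disposes of in the remark preceding Theorem~\ref{main} (swapping $x_i \leftrightarrow y_i$ leaves each prior-vertex set unchanged), so your write-up is simply a more explicit version of the paper's two-line argument.
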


\begin{proof}
This follows immediately from Theorem~\ref{main} since in this setting $h_s$ is the number of subsets of $e_1, \ldots, e_d$ of size $s$ that form an induced matching. Since $h_s \ge 0$, we have $ind(G) \geq s$.
\end{proof}

Proposition~\ref{HS} can also be applied in the more general setting where the regular sequence does not form a perfect matching. The result will be a partially whiskered graph with the same Hilbert series, allowing the use of techniques in \cite[Section 4]{Brennan-Morey}. 

We conclude the paper with some illustrative examples.

\begin{example}
If \(G\) is a Cohen-Macaulay bipartite graph, then by \cite{H-H} \(G\) has a perfect matching \(\{x_1,y_1\}, \ldots, \{x_d,y_d\}\) such that if \(\{x_i,y_j\}\in E(G)\), then \(i\leq j\) and if \(\{x_i,y_j\},\{x_j,y_k\} \in E(G)\) then \(\{x_i,y_k\} \in E(G)\). The second condition implies that each edge of the matching has Property \(P\) and the first condition implies \(N_G(x_i) \cap \{x_1, y_1, \ldots, x_{i-1},y_{i-1}\} = \emptyset\) and so Theorem~\ref{reg seq} applies. Applying Corollary~\ref{CM}, one can then compute the $h$-vector directly from the $f$-vector of an appropriate simplicial complex, formed using the clique complex of the complement of the connectivity graph on the perfect matching. Equivalently, $h_i$ is the number of subsets of cardinality $i$ of the matching for which no two edges of the subset are connected in \(G\).

 For a specific example, consider the Cohen-Macaulay bipartite graph on \(8\) vertices depicted below. The center graph represents \(G_{\{x_1,y_1\}\{x_2,y_2\}\{x_3,y_3\}\{x_4,y_4\}}\) and the right hand graph depicts \(G^{\{x_1,y_1\}\{x_2,y_2\}\{x_3,y_3\}\{x_4y_y\}}\), all of which have the same   \(h\)-vector, \((1,4,2)\). Note  the non-loop edges of the center graph encode connections between the edges of the perfect matching, while independent sets of these vertices (ignoring the loops) determine the $h$-vector.

\vspace{-.5in}
\noindent\begin{minipage}{\textwidth}
\begin{minipage}[c][6cm][c]{0.3\textwidth}
\begin{tikzpicture}
\tikzstyle{point}=[inner sep=0pt]
\node (a)[point,label=left: $x_1$] at (-1,1.3) {};
\node (b)[point,label=left:$x_2$] at (0,1.3){};
\node (c)[point,label=right:$x_3$] at (1,1.3){};
\node (d)[point,label=right:$x_4$] at (2,1.3){};
\node (e)[point,label=left:$y_1$] at (-1,0){};
\node (f)[point,label=left:$y_2$] at (0,0){};
\node (g)[point,label=right:$y_3$] at (1,0){};
\node (h)[point,label=right:$y_4$] at (2,0){};

\draw (a.center) -- (e.center);
\draw (b.center) -- (f.center);
\draw (c.center) -- (g.center);
\draw (d.center) -- (h.center);
\draw (a.center) -- (f.center);
\draw (a.center) -- (g.center);
\draw (a.center) -- (h.center);
\draw (b.center) -- (g.center);
\draw (b.center) -- (h.center);

	\filldraw [black] (a.center) circle (1pt);
	\filldraw [black] (b.center) circle (1pt);
	\filldraw [black] (c.center) circle (1pt);
	\filldraw [black] (d.center) circle (1pt);
	\filldraw [black] (e.center) circle (1pt);
	\filldraw [black] (f.center) circle (1pt);
	\filldraw [black] (g.center) circle (1pt);
	\filldraw [black] (h.center) circle (1pt);
\end{tikzpicture}
\end{minipage}
\begin{minipage}[c][6cm][c]{0.3\textwidth}
\begin{tikzpicture}[every loop/.style={}]
\tikzstyle{point}=[inner sep=0pt]
\node (a)[point,label=left: $x_1$] at (-1,1) {};
\node (b)[point,label=left:$x_2$] at (0,.3){};
\node (c)[point,label=right:$x_3$] at (1,0.3){};
\node (d)[point,label=right:$x_4$] at (2,1){};

\draw (a.center) -- (b.center);
\draw (a.center) -- (c.center);
\draw (a.center) -- (d.center);
\draw (b.center) -- (c.center);
\draw (b.center) -- (d.center);

\draw (a) edge [anchor=center, loop above] (a);
\draw (b) edge [anchor=center, loop below] (b);
\draw (c) edge [anchor=center, loop below] (c);
\draw (d) edge [anchor=center, loop above] (d);

	\filldraw [black] (a.center) circle (1pt);
	\filldraw [black] (b.center) circle (1pt);
	\filldraw [black] (c.center) circle (1pt);
	\filldraw [black] (d.center) circle (1pt);
	
\end{tikzpicture}
\end{minipage}
\begin{minipage}[c][6cm][c]{0.3\textwidth}
\begin{tikzpicture}[every loop/.style={}]
\tikzstyle{point}=[inner sep=0pt]
\node (a)[point,label=left: $x_1$] at (-1,.7) {};
\node (b)[point,label=left:$x_2$] at (0,0){};
\node (c)[point,label=right:$x_3$] at (1,0){};
\node (d)[point,label=right:$x_4$] at (2,.7){};
\node (e)[point,label=left:$y_1$] at (-1,1.5){};
\node (f)[point,label=left:$y_2$] at (0,1.2){};
\node (g)[point,label=right:$y_3$] at (1,1.2){};
\node (h)[point,label=right:$y_4$] at (2,1.5){};

\draw (a.center) -- (b.center);
\draw (a.center) -- (c.center);
\draw (a.center) -- (d.center);
\draw (b.center) -- (c.center);
\draw (b.center) -- (d.center);
\draw (a.center) -- (e.center);
\draw (b.center) -- (f.center);
\draw (c.center) -- (g.center);
\draw (d.center) -- (h.center);

	\filldraw [black] (a.center) circle (1pt);
	\filldraw [black] (b.center) circle (1pt);
	\filldraw [black] (c.center) circle (1pt);
	\filldraw [black] (d.center) circle (1pt);
	\filldraw [black] (e.center) circle (1pt);
	\filldraw [black] (f.center) circle (1pt);
	\filldraw [black] (g.center) circle (1pt);
	\filldraw [black] (h.center) circle (1pt);
\end{tikzpicture}
\end{minipage}
\end{minipage}
\end{example}
\vspace{-.5in}

The following illustrates how the results can be applied when the graph is not Cohen-Macaulay, and thus does not have a perfect matching.

\begin{example}
Let \(I=(x_1x_2,x_2x_3,x_3x_4,x_4x_5,x_5x_6,x_6x_7,x_2x_7,x_2x_5)\) be the edge ideal of the graph depicted below. Notice that \(\{x_1,x_2\}, \{x_3,x_4\},\{x_6,x_7\}\) is a maximal matching where each edge has Property \(P\) and satisfies the condition on neighbor sets neccesary to apply Theorem~\ref{reg seq}.
 As in \cite[Proposition 4.4]{Brennan-Morey}, the $h$-vector of each of these three graphs is computed via a difference of two $f$-vectors: one that counts independent sets of size $i$ and the other (shifted) counting independent sets of size $i-1$ that do not contain \(x_5\). The result is the $h$-vector \((1,3,-2,-1)\), written below as a difference of polynomials to make the shift clear:
\[(1+4t+t^2) - t(1+3t+t^2) = 1 + 3t -2t^2-t^3.\]
\noindent\begin{minipage}{\textwidth}
\begin{minipage}[c][6cm][c]{0.3\textwidth}
\begin{tikzpicture}
\tikzstyle{point}=[inner sep=0pt]
\node (a)[point,label=above: $x_1$] at (3.5, 1) {};
\node (b)[point,label=above:$x_2$] at (2.5,1){};
\node (c)[point,label=right:$x_3$] at (1.75,0){};
\node (d)[point,label=left:$x_4$] at (0.75,0){};
\node (e)[point,label=left:$x_5$] at (0,1){};
\node (f)[point,label=left:$x_6$] at (0.75,2){};
\node (g)[point,label=right:$x_7$] at (1.75,2){};

\draw (a.center) -- (b.center);
\draw (b.center) -- (c.center);
\draw (c.center) -- (d.center);
\draw (d.center) -- (e.center);
\draw (e.center) -- (f.center);
\draw (f.center) -- (g.center);
\draw (b.center) -- (g.center);
\draw (b.center) -- (e.center);

	\filldraw [black] (a.center) circle (1pt);
	\filldraw [black] (b.center) circle (1pt);
	\filldraw [black] (c.center) circle (1pt);
	\filldraw [black] (d.center) circle (1pt);
	\filldraw [black] (e.center) circle (1pt);
	\filldraw [black] (f.center) circle (1pt);
	\filldraw [black] (g.center) circle (1pt);
	
\end{tikzpicture}
\end{minipage}
\begin{minipage}[c][6cm][c]{0.3\textwidth}
\begin{tikzpicture}[every loop/.style={}]
\tikzstyle{point}=[inner sep=0pt]
\node (b)[point,label=above:$x_2$] at (2.5,1){};
\node (d)[point,label=left:$x_4$] at (1.25,0){};
\node (e)[point,label=left:$x_5$] at (0,1){};
\node (g)[point,label=right:$x_7$] at (1.25,2){};

\draw (b.center) -- (d.center);
\draw (d.center) -- (e.center);
\draw (e.center) -- (g.center);
\draw (b.center) -- (g.center);
\draw (b.center) -- (e.center);

\draw (b) edge [anchor=center, loop right] (b);
\draw (d) edge [anchor=center, loop below] (d);
\draw (g) edge [anchor=center, loop above] (g);

	\filldraw [black] (b.center) circle (1pt);
	\filldraw [black] (d.center) circle (1pt);
	\filldraw [black] (e.center) circle (1pt);
	\filldraw [black] (g.center) circle (1pt);
	
\end{tikzpicture}
\end{minipage}
\begin{minipage}[c][6cm][c]{0.3\textwidth}
\begin{tikzpicture}[every loop/.style={}]
\tikzstyle{point}=[inner sep=0pt]
\tikzstyle{point}=[inner sep=0pt]
\node (a)[point,label=right:$x_1$] at (2.5,2){};
\node (b)[point,label=right:$x_2$] at (2.5,1){};
\node (c)[point,label=right:$x_3$] at (1.25,0.85){};
\node (d)[point,label=left:$x_4$] at (1.25,0){};
\node (e)[point,label=left:$x_5$] at (0,1){};
\node (f)[point,label=right:$x_6$] at (1.25,3){};
\node (g)[point,label=right:$x_7$] at (1.25,2){};

\draw (b.center) -- (d.center);
\draw (d.center) -- (e.center);
\draw (e.center) -- (g.center);
\draw (b.center) -- (g.center);
\draw (b.center) -- (e.center);
\draw (a.center) -- (b.center);
\draw (c.center) -- (d.center);
\draw (f.center) -- (g.center);

	\filldraw [black] (a.center) circle (1pt);
	\filldraw [black] (b.center) circle (1pt);
	\filldraw [black] (c.center) circle (1pt);
	\filldraw [black] (d.center) circle (1pt);
	\filldraw [black] (e.center) circle (1pt);
	\filldraw [black] (f.center) circle (1pt);
 	\filldraw [black] (g.center) circle (1pt);
\end{tikzpicture}
\end{minipage}
\end{minipage}
\end{example}

We conclude with an example of how these techniques can be used when there is a known regular sequence consisting of linear binomials that do not necessarily all come from edges with Property \(P\). Examples of such regular elements can be found in \cite{FHM} and such binomials are classified in Theorem~\ref{binomial regular}.

\begin{example}
Let $I=(x_1x_2, x_2x_3,x_2x_4,x_4x_5,x_4x_6)$ be the edge ideal of the graph $G$ depicted below. Then $x_1+x_2, x_4+x_6, x_3+x_5$ forms a regular sequence on $R/I$ (see \cite{FHM}). The graph corresponding to $R/(I,x_1+x_2, x_4+x_6, x_3+x_5)$ and its polarization are also depicted below. Note that the $h$-vector can again be computed as a difference via \cite[Proposition 4.4]{Brennan-Morey}.

\noindent\begin{minipage}{\textwidth}
\begin{minipage}[c][6cm][c]{0.3\textwidth}
\begin{tikzpicture}
\tikzstyle{point}=[inner sep=0pt]
\node (a)[point,label=right: $x_1$] at (0,0) {};
\node (b)[point,label=left:$x_2$] at (1,1){};
\node (c)[point,label=right:$x_3$] at (0,2){};
\node (d)[point,label=right:$x_4$] at (2,1){};
\node (e)[point,label=left:$x_5$] at (3,2){};
\node (f)[point,label=left:$x_6$] at (3,0){};

\draw (a.center) -- (b.center);
\draw (b.center) -- (c.center);
\draw (b.center) -- (d.center);
\draw (d.center) -- (e.center);
\draw (d.center) -- (f.center);

	\filldraw [black] (a.center) circle (1pt);
	\filldraw [black] (b.center) circle (1pt);
	\filldraw [black] (c.center) circle (1pt);
	\filldraw [black] (d.center) circle (1pt);
	\filldraw [black] (e.center) circle (1pt);
	\filldraw [black] (f.center) circle (1pt);

\end{tikzpicture}
\end{minipage}
\begin{minipage}[c][6cm][c]{0.3\textwidth}
\begin{tikzpicture}[every loop/.style={}]
\tikzstyle{point}=[inner sep=0pt]
\node (b)[point,label=left:$x_1$] at (1,1){};
\node (c)[point,label=right:$x_3$] at (1.5,2){};
\node (d)[point,label=right:$x_4$] at (2,1){};

\draw (b.center) -- (c.center);
\draw (b.center) -- (d.center);
\draw (d.center) -- (c.center);

\draw (b) edge [anchor=center, loop below] (b);
\draw (d) edge [anchor=center, loop below] (d);

	\filldraw [black] (b.center) circle (1pt);
	\filldraw [black] (c.center) circle (1pt);
	\filldraw [black] (d.center) circle (1pt);
	
\end{tikzpicture}
\end{minipage}
\begin{minipage}[c][6cm][c]{0.3\textwidth}
\begin{tikzpicture}[every loop/.style={}]
\tikzstyle{point}=[inner sep=0pt]
\node (a)[point,label=right: $x_2$] at (0,0) {};
\node (b)[point,label=left:$x_1$] at (1,1){};
\node (c)[point,label=right:$x_3$] at (1.5,2){};
\node (d)[point,label=right:$x_4$] at (2,1){};
\node (f)[point,label=left:$x_6$] at (3,0){};

\draw (a.center) -- (b.center);
\draw (b.center) -- (c.center);
\draw (b.center) -- (d.center);
\draw (d.center) -- (c.center);
\draw (d.center) -- (f.center);

	\filldraw [black] (a.center) circle (1pt);
	\filldraw [black] (b.center) circle (1pt);
	\filldraw [black] (c.center) circle (1pt);
	\filldraw [black] (d.center) circle (1pt);
	\filldraw [black] (f.center) circle (1pt);
	
\end{tikzpicture}
\end{minipage}
\end{minipage}
Then as above, writing the $h$-vector as a polynomial in $t$ to emphasize the shifts, we have the numerator of the Hilbert series is $(1+3t) - t(1+2t) = 1+2t-2t^2$, where $1+3t$ encapsulates the independent sets of the triangle $x_1,x_3,x_4$ and $t(1+2t)$ counts independent sets not involving $x_3$.

\end{example}

\section{Declarations}
No external funding was received to assist with the preparation of this manuscript. The authors have no relevant financial or non-financial interests to disclose. There is no associated data with this manuscript.
 \bibliography{bibliography}
\end{document}